\numberwithin{equation}{section}
\theoremstyle{plain}
\newtheorem*{theorem*}{Theorem}
\newtheorem*{lemma*}{Lemma}
\newtheorem{theorem}{Theorem}
\newtheorem{lemma}{Lemma}[section]
\newtheorem{proposition}[lemma]{Proposition}
\theoremstyle{definition}
\newtheorem{remark}[lemma]{Remark}
\newtheorem*{remark*}{Remark}
\newtheorem*{example*}{Example}
\newtheorem*{er*}{Examples and Remarks}
\newcommand{\Hn}{{\mathcal H}}
\newcommand{\Wn}{{\mathcal W}}
\newcommand{\dthth}{\partial_{\theta \theta}} 
\newcommand{\dRth}{\partial_{R \theta}} 
\newcommand{\dth}{\partial_{\theta}} 
\newcommand{\dRR}{\partial_{RR}} 
\newcommand{\dR}{\partial_{R}}
\newcommand{\dx}{\partial_{x}} 
\newcommand{\dt}{\partial_{t}} 
\newcommand{\dy}{\partial_{y}} 
\newcommand{\dg}{\partial_{\gamma}}
\def\de{{\partial}}
\begin{document}

\title{ Strong Ill-Posedness in $L^\infty$ for the Riesz Transform Problem}
\author{Tarek M. Elgindi,   Karim R. Shikh Khalil}
\maketitle
\abstract{We prove strong ill-posedness in $L^{\infty}$ for linear perturbations of the 2d Euler equations of the form:
\[\partial_t \omega + u\cdot\nabla\omega = R(\omega),\] where $R$ is any non-trivial second order Riesz transform. 
Namely, we prove that there exist smooth solutions that are initially small in $L^{\infty}$ but become arbitrarily large in short time. Previous works in this direction relied on the strong ill-posedness of the linear problem, viewing the transport term perturbatively, which only led to mild growth. In this work we derive a nonlinear model taking all of the leading order effects into account to determine the precise pointwise growth of solutions for short time. Interestingly, the Euler transport term does counteract the linear growth so that the full nonlinear equation grows an order of magnitude less than the linear one. In particular, the (sharp) growth rate we establish is consistent with the global regularity of smooth solutions.}

\section{Introduction}

The Euler equations for incompressible flow are a fundamental model in fluid dynamics that describe the motion of ideal fluids:
\begin{equation}\label{Euler}
\begin{split}
\partial_t u &+  u\cdot\nabla u +\nabla p= 0 \\ 
&\nabla \cdot u=0. \\  
\end{split}
\end{equation}
 In this equation, $u$ is the velocity field and $p$ is the pressure of an ideal fluid flowing in $\mathbb{R}^2$. A key difficulty in understanding the dynamics of 2d Euler flows is the non-locality of the system due to the presence of the pressure term. 

 Defining the vorticity $\omega:= \nabla^\perp\cdot u$, it is insightful to study the Euler equations in vorticity form: 
\begin{equation}\label{EulerVorticty}
\begin{split}
\partial_t \omega &+  u\cdot\nabla \omega= 0, \\ 
&\nabla \cdot u=0 \\  
&u=\nabla^\perp \Delta^{-1} \omega.\\
\end{split}
\end{equation}
Because the $L^{\infty}$ norm of vorticity is conserved in the Euler equations in two dimensions, Yudovich \cite{Y} proved that there is a unique global-in-time solution to the Euler equation corresponding to every initial bounded and decaying vorticity. See also (\cite{Wo}, \cite{BKM}, \cite{HO},\cite{ Y},\cite{Ka}, \cite{MP},\cite{MB}). This bound on the $L^{\infty}$ norm is unfortunately unstable even to very mild perturbations of the equation ~\cite{CV,EM,EsharpLp}. To understand this phenomenon, we are interested in studying linear perturbations of the Euler equations in two dimensions as follows:
\begin{equation}\label{EulerRieszM}
\begin{split}
\partial_t u &+  u\cdot\nabla u +\nabla p= \begin{pmatrix}
0\\
u_1
\end{pmatrix}\\ 
&\nabla \cdot u=0 \\  
\end{split}
\end{equation}

\eqref{EulerRieszM} is a model for many problems in fluids dynamics that have a coupling with the Euler equations.   For instance, similar types of equations appear in viscoelastic fluids see~\cite{CK,EF,LM,CM} and in  magnetohydrodynamics see~\cite{BLW,H,CW,WZ}. Further, they also appear when studying stochastic Euler equation, see~\cite{GV}.

  Writing \eqref{EulerRieszM} in vorticity form, we get
\begin{equation}\label{EulerR}
\begin{split}
\partial_t \omega &+  u\cdot\nabla \omega= \partial_x u_1 \\ 
&\nabla \cdot u=0 \\  
&u=\nabla^\perp \Delta^{-1} \omega,\\
\end{split}
\end{equation} we observe that the challenge of studying these equations is that the right hand side of $\eqref{EulerR}$ can be written as the Riesz transform of vorticity $ \partial_x u_1= R(\omega)$,  which is unbounded on $L^{\infty}$. P. Constantin and V. Vicol  considered these equations with weak dissipation in \cite{CV}, and they proved  global well-posedness. However, without dissipation it is an open question whether these equations are globally well-posed.   In this work, we are interested in the question of  $L^{\infty}$ ill/well-posedness of the Euler equations with Riesz forcing and the local rate of $L^\infty$ growth. The first author and N. Masmoudi studied the Euler equations  with Riesz forcing in~\cite{EM},  where they proved that it is mildly ill-posed. This means  that there is a universal constant $c>0$ such that  for all $\epsilon>0$, there is $\omega_0\in C^\infty$ for which the unique local solution to $\eqref{EulerR}$ satisfies:

\begin{equation}\label{EMresult}
|\omega_0|_{L^\infty}\leq \epsilon, \,\, \text{but}  \,\, \sup_{t\in [0,\epsilon]} |\omega(t)|_{L^\infty}\geq c \end{equation}
The authors in~\cite{EM} conjectured   that the Euler  equations with Riesz forcing is actually strongly  ill-posed in $L^{\infty}$. Namely, that we can take $c$ in  $\eqref{EMresult}$ to be arbitrarily  large.  The goal of our work here is to show that indeed this is possible. To show this, we use the first author's  Biot-Savart law decomposition~\cite{E} to derive a leading order system for the Euler equations with Riesz  forcing. We then show that the leading order system is strongly ill-posed in $L^{\infty}$. Using this,  we can show that  the Euler equations with Riesz forcing is strongly   ill-posed by estimating the error  between the leading order system and the Euler with Riesz forcing system on a specific time interval.  

We should remark that the main application of the approach of the first author and N. Masmoudi in \cite{EM} was to prove ill-posedness of the Euler equation in the integer $C^k$ spaces, which was also proved independently by J. Bourgain and D. Li in \cite{BL}. Regarding the notion of mild ill-posedness in $L^{\infty}$ for models related to the Euler with Riesz forcing system, see  the work of J. Wu and J. Zhao in \cite{WZ} about the $2D$ resistive MHD equations.

\subsection{Statement of the main result}

\begin{theorem}
For any $\alpha,\delta>0$, there exists an initial data $\omega_0^{\alpha,\delta} \in C_c^{\infty}(\mathbb{R}^2)$  and   $T(\alpha)$ such that the corresponding unique global solution, $\omega^{\alpha,\delta}$, to \eqref{EulerR} is such that 
  at $t=0$ we have $$|\omega_0^{\alpha,\delta}|_{L^{\infty}}=\delta,$$ but for any  $0<t\leq T(\alpha)$ we have $$    \quad|\omega^{\alpha,\delta}(t)|_{L^{\infty}} \geq |\omega_0|_{L^\infty}+c \log (1+\frac{c}{\alpha}t),
  $$
where $T(\alpha)= c \alpha |\log(\alpha)|$ and $c>0$ is a constant independent of $\alpha.$
\end{theorem}

\begin{remark}
Note that at time $t=T(\alpha),$ we have that 
\[|\omega^{\alpha,\delta}|_{L^\infty}\geq c\log(c|\log\alpha|),\] which can be made arbitrarily large as $\alpha\rightarrow 0.$ Fixing $\delta>0$ small and then taking $\alpha$ sufficiently small thus gives strong ill-posedness for \eqref{EulerR} in $L^\infty.$ 
\end{remark}
\begin{remark}
As we will discuss below, we in fact establish upper and lower bounds on the solutions we construct so that on the same time-interval we have:
 $$    \quad|\omega^{\alpha,\delta}(t)|_{L^{\infty}} \approx |\omega_0|_{L^\infty}+c \log (1+\frac{c}{\alpha}t).
  $$
  This should be contrasted with the linear problem where the upper and lower bounds for the same data  come without the $\log:$
  \[|\omega^{\alpha,\delta}_{linear}(t)|_{L^\infty}\approx |\omega_0|_{L^\infty} + c(1+\frac{c}{\alpha}t).\]
\end{remark}

\begin{remark}
Our ill-posedness result applies to the equation:
\[\partial_t\omega+u\cdot\nabla\omega=R(\omega),\] where $R=R_{12}=\partial_{12}\Delta^{-1}.$ Note that a direct consequence of the result gives strong ill-posedness when $R=R_{11}$ or $R=R_{22}$ even though these are dissipative on $L^2.$ This can be seen just by noting that a linear change of coordinates can transform $R_{12}$ to a constant multiple of $R_{11}-R_{22}=R_{11}-Id$. The strong ill-posedness for the Euler equation with forcing by any second order Riesz transform (other than the identity) follows. We further remark that the same strategy can be used to study the case of general Riesz transforms though we do not undertake this here since the case of forcing by second order Riesz transforms is the most relevant for applications we are aware of (such as the 3d Euler equations, the Boussinesq system, visco-elastic models, MHD, etc.). 
\end{remark}

\subsection{Comparison with the linear equation and the effect of transport}

We now move to compare the result of this paper with the corresponding linear results and emphasize the regularizing effect of the non-linearity in this problem. The ill-posedness result of \cite{EM} relies on viewing \eqref{EulerR} as a perturbation of 
\begin{equation}\label{LinearEqn}\partial_t f = R(f).\end{equation} For this simple linear equation, it is easy to show that $L^\infty$ data can immediately develop a logarithmic singularity. Let us mention two ways to quantify this logarithmic singularity. One way is to study the growth of $L^p$ norms as $p\rightarrow\infty$. For the linear equation \eqref{LinearEqn}, it is easy to show that the upper bound:
\[|f(t)|_{L^p}\leq \exp(Ct) p |f_0|_{L^p}\] is sharp in the sense that we can find localized $L^\infty$ data for which the solution satisfies  \[|f(t)|_{L^p}\geq c(t)\cdot p.\] This can be viewed as approximating $L^\infty$ "from below." Similarly, the $C^\alpha$ bound for \eqref{LinearEqn},
\[|f(t)|_{C^\alpha}\leq \frac{\exp(Ct)}{\alpha} |f_0|_{C^\alpha}\] can also be shown to be sharp for short time in that we can find for each $\alpha>0$ smooth and localized data with $|f_0|_{C^\alpha}=1$ for which 
\[|f(t)|_{L^\infty}\geq \frac{c(t)}{\alpha}.\] The main result of \cite{EM} was that these upper and lower bounds remain unchanged in the presence of a transport term by a Lipschitz continuous velocity field. This is not directly applicable to our setting since the coupling between $\omega$ and $u$ is such that $u$ may not be Lipschitz even if $\omega$ is bounded. Interestingly, in \cite{EsharpLp}, it was shown that this growth could be significantly stronger in the presence of a merely bounded velocity field. 

All of the above discussion leads us to understand that the nature of the well/ill-posedness of \eqref{EulerR} will depend on the precise relationship between the velocity field and the linear forcing term in \eqref{EulerR}. In particular, for a natural class of data, we construct solutions to \eqref{EulerR} satisfying
\[|\omega|_{L^\infty} \approx 1+ \log(1+\frac{t}{\alpha}),\] for short time, which is the best growth rate possible in this setting. This should be contrasted with the corresponding growth rate for the linear problem \[|\omega_{lin}|_{L^\infty}\approx 1+ \frac{t}{\alpha}.\] In particular, the nonlinear term in \eqref{EulerR} actually tries to \emph{prevent} $L^\infty$ growth. Let us finally remark that the weak growth rate we found is consistent with the vorticity trying to develop a $\log\log$ singularity. It is curious that, in the Euler equation, vorticity with nearly $\log\log$ data are perfectly well-behaved and consistent with global regularity but with a triple exponential upper bound on gradients. Though establishing the global regularity rigorously remains a major open problem, this appears to be a sign that perhaps smooth solutions to \eqref{EulerRieszM} are globally regular.

\subsection{A short discussion of the proof}
The first step of the proof is to use the Biot-Savart law decomposition by the first author \cite{E} to derive a leading order model: 
\[\partial_t \Omega + \frac{1}{2\alpha}(L_s(\Omega)\sin(2\theta)+L_c(\Omega)\cos(2\theta))\partial_\theta\Omega= \frac{1}{2\alpha}L_s(\Omega),\] where the operators $L_s$ and $L_c$ are bounded linear operators on $L^2$ defined by $$
 L_s(f)(R)= \frac{1}{\pi}\int_{R}^{\infty} \int_0^{2\pi} \frac{f(s,\theta)}{s}  \sin(2\theta) \, d\theta \, ds  \quad \text{and} \quad L_c(f)(R)= \frac{1}{\pi}\int_{R}^{\infty} \int_0^{2\pi} \frac{f(s,\theta)}{s}  \cos(2\theta) \, d\theta \, ds.
 $$
 Essentially all we do here is replace the velocity field by its most singular part. Upon inspecting this model, we observe that the forcing term on the right hand side is purely radial while the direction of transport is angular. Upon choosing a suitable unknown, we thus reduce the problem to solving a transport equation for some unknown $f$:
\[\partial_t f + \frac{1}{2\alpha} L_s(f)\sin(2\theta)\partial_\theta f=0.\] Surprisingly, this reduced equation propagates the usual "odd-odd" symmetry even though the original system does not. The leading order model will then be strongly ill-posed if we can ensure that the solution of this transport equation satisfies that $\int_0^t L_s(f)$ can be arbitrarily large. One subtlety is that the growth of $L_s(f)$ enhances the transport effect, which in turn depletes the growth of $L_s(f)$. In fact, were the transport term to be stronger even by a log, the problem would \emph{not} be strongly ill-posed. By a careful study of the characteristics of this equation, we obtain a closed non-linear integro-differential equation governing the evolution of $L_s(f)$ (see equation \eqref{Lf}). We study this non-linear integro-differential equation and establish upper and lower bounds on $L_s(f)$  proving strong ill-posedness for the leading order equation; see section \ref{LOME} for more details.   Finally, we close the argument by estimating the error incurred by approximating the dynamics with the leading order model. An important idea here is to work on a time scale long enough to see the growth from the leading order model but short enough to suppress any potential stronger non-linear growth; see section \ref{ReminderEstS} for more details.

\subsection{Organization}
This paper is organized as follow: In section \ref{LOM}, we derive a leading order model for the Euler equations with Riesz forcing \eqref{EulerR} based on the first author's Biot-Savart law approximation~\cite{E}. Then, in section \ref{LOME}, we obtain a pointwise estimate on the leading order model which is the main ingredient in obtaining the strong ill-posedness result for the Euler with Riesz forcing system. In addition, in section  \ref{LOME}, we also obtain some estimates on the leading order model in suitable norms which will be then  used  in estimating the reminder term in section \ref{ReminderEstS}. After that, in section \ref{ElgindiEllipticEst} we will recall the first author's  Biot-Savart law decomposition obtained in~\cite{E}, and we will include a short sketch of the proof. In section \ref{EmbeddEst},  we will obtain some embedding estimates which will also be used in section \ref{ReminderEstS} for the reminder term estimates. Then, in section \ref{ReminderEstS}, we show  that the reminder term remains small which will then  allow us to prove the main result in section \ref{Main}. 

\subsection{Notation}

In this paper, we will be working in a form polar coordinates  introduced in~\cite{E}. Let $r$ be the  radial variable:
$$
r=\sqrt{x^2+y^2}
$$
and since we will be working with functions of the variable $r^{\alpha}$, where $0<\alpha<1$, we will use  $R$ to denote it:
$$
R=r^{\alpha}$$
We will use $\theta$ to denote the angle variable:
$$
\theta=\arctan{\frac{y}{x}}
$$

We will use $|f|_{L^{\infty}}$ and  $|f|_{L^2}$ to denote the usual $L^{\infty}$ and $L^{2}$ norms, respectively. In addition, we will  use $f_t$ or $f_{\tau}$ to denote the time variable. Further, in this paper, following~\cite{E}, we will be working on $(R, \theta) \in [0,\infty) \times  [0, \frac{\pi}{2}]$ where the $L^2$ norm will be with measure $dR \, d\theta$ and not  $R\, dR \, d\theta$. 

We define the weighted $\Hn^{k}( [0,\infty) \times  [0, \frac{\pi}{2}])$ norm as follows:

$$
|f|_{\dot{\Hn}^m}=  \sum_{i=0}^m  |\dR^i \dth^{m-i} f|_{L^2}+ \sum_{i=1}^m |R^i \dR^i \dth^{m-i} f|_{L^2}
$$

$$
|f|_{\Hn^k}= \sum_{m=0}^{k} |f|_{\dot{\Hn}^m}
$$

We also define $\Wn^{k,\infty}$ norm as follows:
$$
|f|_{\dot{\Wn}^{m,\infty}}=  \sum_{i=0}^m  |\dR^i \dth^{m-i} f|_{L^{\infty}}+ \sum_{i=1}^m |R^i \dR^i \dth^{m-i} f|_{L^{\infty}}
$$

$$
|f|_{\Wn^{k,\infty}}= \sum_{m=0}^{k} |f|_{\dot{\Wn}^{m,\infty}}
$$

Throughout this paper, we will use the following notation to define the following operators:
$$
L(f)(R)=\int_R^{\infty}\frac{f(s)}{s} \, ds
$$
and by adding a subscript $L_s$ or $L_c$, we denote the project onto $\sin(2 \theta)$ and $\cos(2 \theta)$ respectively. Namely, 
$$
 L_s(f)(R)= \frac{1}{\pi}\int_{R}^{\infty} \int_0^{2\pi} \frac{f(s,\theta)}{s}  \sin(2\theta) \, d\theta \, ds  \quad \text{and} \quad L_c(f)(R)= \frac{1}{\pi} \int_{R}^{\infty} \int_0^{2\pi} \frac{f(s,\theta)}{s}  \cos(2\theta) \, d\theta \, ds
 $$

\section{Leading Order Model}\label{LOM}

In this section, we will derive a leading order model for the Euler equation with Riesz forcing:
\begin{equation}\label{EulerRiesz}
\begin{split}
\partial_t \omega &+  u\cdot\nabla \omega= \partial_x u_1 \\ 
&\nabla \cdot u=0 \\  
&u=\nabla^\perp \Delta^{-1} \omega\\
\end{split}
\end{equation}

To do this, we follow~\cite{E} and we write the equation in a form of polar coordinates. Namely, we set  $r=\sqrt{x^2+y^2}$, $R=r^\alpha$, and $\theta= \arctan{\frac{y}{x}}$. We will the rewrite the equation \eqref{EulerRiesz} in the new functions $ \omega(x,y)=\Omega(R,\theta)$ and $ \psi(x,y)= r^2 \Psi(R,\theta)$ with  $u=\nabla^\perp \psi $, where 
$
u_1=- \dy \psi
$, and  
$u_2= \dx \psi
$.  

\textbf{Equations of $u$ in terms of $\Psi$}
$$
u_1=-r(2 \sin(\theta) \Psi+ \alpha \sin(\theta )R  \, \dR  \Psi +\cos(\theta) \dth \Psi)
$$
$$
u_2=r(2 \cos(\theta) \Psi+ \alpha \cos(\theta )R  \, \dR  \Psi -\sin(\theta) \dth \Psi)
$$

\textbf{Evolution Equation for $ \Omega$}
\begin{equation*}
\begin{split}
\partial_t{\Omega} +  \Big( -\alpha R \dth \Psi  \Big)\dR \Omega + \Big( 2 \Psi  + \alpha R \dR \Psi \Big)  \dth \Omega &=  \big(-2 \alpha R \sin(\theta) \cos(\theta) -\alpha^2 R \sin(\theta) \cos(\theta) \big) \dR \Psi \\  &+\big( -1+2 \sin^2(\theta) \big) \dth \Psi +\big(- \alpha R \cos^2(\theta) + \alpha R \sin^2(\theta)\big) \dRth\Psi \\ &-  \big(\alpha^2R^2 \sin(\theta) \cos(\theta) \big)\dRR \Psi + \big( \sin(\theta) \cos(\theta)  \big)\dthth\Psi \\
\end{split}
\end{equation*}

\textbf{The elliptic equation for $\Delta( r^2 \Psi(R,\theta)) =\Omega(R,\theta)$}
$$
4 \Psi + \alpha^2 R^2  \dRR \Psi + \dthth \Psi +(4 \alpha +\alpha^2) R \dR \Psi=\Omega(R,\theta)
$$

Now using  the first author's  Biot-Savart decomposition  \cite{E}, see section \ref{ElgindiEllipticEst} for more details, by defining the operators  
$$
 L_s(\Omega)(R)=\frac{1}{\pi} \int_{R}^{\infty} \int_0^{2\pi} \frac{\Omega(s,\theta)}{s}  \sin(2\theta) \, d\theta \, ds  \quad \text{and} \quad L_c(\Omega)(R)=\frac{1}{\pi} \int_{R}^{\infty} \int_0^{2\pi} \frac{\Omega(s,\theta)}{s}  \cos(2\theta) \, d\theta \, ds
 $$

we have 
$$
\Psi(R,\theta)=-\frac{1}{4 \alpha}  L_s(\Omega) \sin(2 \theta)- \frac{1}{4 \alpha}  L_c(\Omega) \cos(2 \theta)+ \text{lower order terms}
$$

Thus, if we ignore the $\alpha$ terms in the evolution equation, we obtain 
\begin{equation} \label{FundEq}
\begin{split}
\partial_t{\Omega} +  \Big( 2 \Psi   \Big)  \dth \Omega &=  \Big( -1+2 \sin^2(\theta) \Big) \dth \Psi +\Big( \sin(\theta) \cos(\theta)  \Big)\dthth\Psi \\
\end{split}
\end{equation}

Now  we consider $\Psi$ of the form 

$$
\Psi= -\frac{1}{4 \alpha}  L_s(\Omega) \sin(2 \theta)- \frac{1}{4 \alpha}  L_c(\Omega) \cos(2 \theta)
$$

and plug it into the evolution equation, we have  
\begin{equation*}
\begin{split}
\partial_t{\Omega} -  \Big(  \frac{1}{2 \alpha}   L_s(\Omega)\sin(2\theta)+\frac{1}{2 \alpha} L_c(\Omega) \cos(2\theta) 
 \Big)  \dth \Omega =   &-\big( \cos(2\theta)\big) \big(- \frac{1}{2 \alpha} L_s(\Omega) \cos(2\theta) +\frac{1}{2 \alpha} L_c(\Omega) \sin(2\theta) 
 \big)  \\
  &+ \big( \frac{1}{2}\sin(2 \theta)  \big) \big( \frac{1}{ \alpha}  L_s(\Omega) \sin(2\theta)+\frac{1}{ \alpha}  L_c(\Omega) \cos(2\theta)   \big)  \\
\end{split}
\end{equation*}

which simplifies to 

\[\partial_t{\Omega} - \Big(  \frac{1}{2 \alpha}   L_s(\Omega)\sin(2\theta)+\frac{1}{2 \alpha} L_c(\Omega) \cos(2\theta) 
 \Big)  \dth \Omega =\frac{1}{2 \alpha} L_s(\Omega)
\]

In order to work with positive solutions and have the angular trajectories moving to the right, we make the change $\Omega\rightarrow-\Omega$ and get the final model:

\begin{equation}\label{OmegaModal}
\begin{split}
\partial_t{\Omega} +  \Big(  \frac{1}{2 \alpha}   L_s(\Omega)\sin(2\theta)+\frac{1}{2 \alpha} L_c(\Omega) \cos(2\theta) 
 \Big)  \dth \Omega &=\frac{1}{2 \alpha} L_s(\Omega).\\
\end{split}
\end{equation}

\noindent We now move to study the dynamics of solutions to \eqref{OmegaModal}.

\begin{proposition} \label{OmegaModalPStat}Let $\Omega$ be a solution to the leading order model 
\begin{equation}\label{OmegaModalP}
\begin{split}
\partial_t{\Omega} +  \Big(  \frac{1}{2 \alpha}   L_s(\Omega)\sin(2\theta)+\frac{1}{2 \alpha} L_c(\Omega) \cos(2\theta) 
 \Big)  \dth \Omega &=\frac{1}{2 \alpha} L_s(\Omega)\\
\end{split}
\end{equation}
with initial data of the form  $\Omega|_{t=0}=f_0 (R) \sin(2 \theta)$ then we can write $\Omega$ as follow: 
\begin{equation}
\Omega= f + \frac{1}{2 \alpha} \int_0^t  L_s(f_\tau) d \tau
\end{equation}
where $f$ satisfies the following transport equation:

\begin{equation} 
\begin{split}
\partial_t{f} +  \frac{1}{2 \alpha}  \sin(2\theta)  L_s(f)  \dth f &=0\\
\end{split}
\end{equation}
 
\end{proposition}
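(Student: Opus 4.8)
The plan is to prove the identity by verification: build a candidate of the asserted shape, check it solves \eqref{OmegaModalP} with the prescribed data, and appeal to uniqueness for \eqref{OmegaModalP}. Let $f$ solve the nonlinear transport equation
\[
\partial_t f + \frac{1}{2\alpha}\sin(2\theta)\,L_s(f)\,\partial_\theta f = 0,\qquad f|_{t=0}=f_0(R)\sin(2\theta),
\]
put $g(R,t):=\frac{1}{2\alpha}\int_0^t L_s(f_\tau)(R)\,d\tau$, and set $\Omega:=f+g$. The key point is that $g$ does not depend on $\theta$, so $\partial_\theta\Omega=\partial_\theta f$, $\partial_t g=\frac{1}{2\alpha}L_s(f)$, and $L_s(g)=L_c(g)=0$ since $\int_0^{2\pi}\sin(2\theta)\,d\theta=\int_0^{2\pi}\cos(2\theta)\,d\theta=0$; hence $L_s(\Omega)=L_s(f)$ and $L_c(\Omega)=L_c(f)$. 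Substituting $\Omega=f+g$ into the left-hand side of \eqref{OmegaModalP} and using the transport equation for $f$, every term cancels except $\partial_t g=\frac{1}{2\alpha}L_s(f)=\frac{1}{2\alpha}L_s(\Omega)$ --- exactly the forcing --- \emph{provided} the one leftover drift term $\frac{1}{2\alpha}L_c(f)\cos(2\theta)\,\partial_\theta f$ vanishes. Thus the whole proposition reduces to showing $L_c(f)\equiv 0$; granting this, $f+g$ solves \eqref{OmegaModalP} with data $f_0(R)\sin(2\theta)$, and uniqueness identifies the given $\Omega$ with $f+g$. (Equivalently one could decompose the given $\Omega$ into its $\theta$-independent part and a remainder and read off the two equations directly; the algebra is the same.)

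The vanishing $L_c(f)\equiv 0$ is the one structural ingredient, amounting to the propagation of the "odd-odd" symmetry mentioned in the introduction. I would prove it by noting that the operator $M:=\sin(2\theta)\,\partial_\theta$ preserves the closed linear span $V$ of $\{\sin(2n\theta):n\ge1\}$, because a product-to-sum identity gives $M\sin(2n\theta)=n\bigl(\sin(2(n+1)\theta)-\sin(2(n-1)\theta)\bigr)\in V$. Since $L_s(f)(R,t)$ is independent of $\theta$, the transport equation reads $\partial_t f=-\frac{1}{2\alpha}L_s(f)\,Mf$, so $f_0(R)\sin(2\theta)\in V$ forces $f(\cdot,\cdot,t)\in V$ for every $t$; this can be checked Fourier coefficient by Fourier coefficient in $\theta$, or via the reflection $\theta\mapsto-\theta$ together with uniqueness for the transport equation (the drift $\sin(2\theta)L_s(f)$ being odd in $\theta$). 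As $\cos(2\theta)\perp V$ in $L^2(d\theta)$, we get $L_c(f)(R,t)=\frac{1}{\pi}\int_R^\infty s^{-1}\,\langle f(s,\cdot,t),\cos(2\,\cdot)\rangle\,ds\equiv 0$, which is precisely what the first paragraph requires.

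The routine-but-necessary background is well-posedness: local existence and uniqueness for both \eqref{OmegaModalP} and the reduced $f$-equation within a class of sufficiently smooth, spatially decaying functions (the $\Hn^k$-type spaces used in the paper). The nonlinearity of the $f$-equation is mild --- its drift coefficient $\frac{1}{2\alpha}\sin(2\theta)L_s(f)$ is a bounded average of $f$, and the transport acts only in $\theta$ with $R$ a passive parameter --- so a standard Picard/characteristics argument applies, as carried out in Section~\ref{LOME}. I expect the only real friction to be bookkeeping the function space so that $L_s(f)$, $L_c(f)$, the $\theta$-integrations by parts, differentiation under the integral sign defining $g$, and the uniqueness statement used to identify $\Omega$ with $f+g$ are all legitimate for the solution handed to us; beyond establishing $L_c(f)\equiv 0$, the proof is elementary algebra.
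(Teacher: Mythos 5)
Your proposal is correct and rests on exactly the structural ingredient the paper uses: the nonlinear transport $\partial_t f+\tfrac{1}{2\alpha}\sin(2\theta)L_s(f)\partial_\theta f=0$ preserves the span of $\{\sin(2n\theta)\}_{n\ge1}$, so $L_c(f)\equiv0$ and the would-be $\cos(2\theta)$-drift never appears. Your product-to-sum computation $M\sin(2n\theta)=n(\sin(2(n+1)\theta)-\sin(2(n-1)\theta))$ checks out and is a slightly more explicit rendering of the paper's one-line remark that the transport "preserves odd symmetry." The one genuine difference is directional. The paper works forward from the given $\Omega$: it sets $f_t:=\Omega_t-\tfrac{1}{2\alpha}\int_0^t L_s(\Omega_\tau)\,d\tau$, observes that the subtracted piece is radial (so $\partial_\theta\Omega=\partial_\theta f$ and $L_s(\Omega)=L_s(f)$, $L_c(\Omega)=L_c(f)$), substitutes into \eqref{OmegaModalP} to read off the transport equation for $f$, and then uses oddness to drop the $\cos$ term. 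You instead run it backward — solve the reduced $f$-equation, build the candidate $f+g$, verify it solves the model, and invoke uniqueness for \eqref{OmegaModalP} to identify it with the given solution. Both are valid, but your route introduces a dependency (uniqueness for the full model equation) that the paper's direct decomposition avoids entirely; since the paper never explicitly states or proves such a uniqueness theorem, the forward decomposition is the more economical choice here, though the algebra is, as you note yourself, the same either way.
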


\begin{proof}
The righthand side term of \eqref{OmegaModalP} is  radial, and hence if we take the inner product with $\sin(2\theta)$ it will be zero. 
Now if write $\Omega$ as:  $$ \Omega_t(R,\theta)= f_t (R,\theta)+ \frac{1}{2 \alpha} \int_0^t  L_s(\Omega_\tau)(R) d \tau$$
and consider it to be a solution to  $\eqref{OmegaModalP}$, we obtain that $f$ satisfies the following:
\begin{equation} \label{fModel}
\begin{split}
\partial_t{f_t} +  \Big(  \frac{1}{2 \alpha}   L_s(f_t)\sin(2\theta)+\frac{1}{2 \alpha} L_c(f_t) \cos(2\theta) 
 \Big)  \dth f_t &=0\\
\end{split}
\end{equation}
Here we used that $L_s(\Omega_\tau)(R)$ is a radial function. Notice that $\eqref{fModel}$ is a transport equation that  preserves odd symmetry. Now if we set: 
 $$f_t^s=\int_0^{2\pi} f_t(R,\theta) \sin(2\theta) d\theta \quad \text{and} \quad \Omega_t^s=\int_0^{2\pi} \Omega_t(R,\theta) \sin(2\theta) \, d \theta,  $$
 we notice that $f_t^s$ and $\Omega_t^s$ will satisfy the same equation. Thus, if we start with the same initial conditions $f_0=\Omega_0$, then $$f_t^s=\Omega_t^s \quad \text{for all  } t $$ Thus, we  have $L_s(\Omega_t)=L_s(f_t)$, and hence 
$$
\Omega_t= f_t + \frac{1}{2 \alpha} \int_0^t  L_s(f_\tau) d \tau
$$

Now since the initial data which  we are considering  have odd symmetry, it suffices to consider the following transport  equation:
 \begin{equation} \label{fModel2}
\begin{split}
\partial_t{f_t} +  \frac{1}{2 \alpha}  \sin(2\theta)  L_s(f_t)  \dth f_t &=0\\
\end{split}
\end{equation}

\end{proof}
\section{Leading Order Model Estimate}\label{LOME}
The purpose of this section is to obtain $L^{\infty}$ estimates for the leading order model which is the main ingredient in obtaining  the ill-posedness result for the the Euler with Riesz forcing system. This will be done in subsection \ref{PLOME} in  three steps: Lemma \ref{LeadinIneq}, Lemma \ref{ApprOp}, and Proposition \ref{LeadingEst}. Then in subsection \ref{ELOMnorms}, we will obtain some estimate for the leading order model which will be useful in reminder estimates in section  \ref{ReminderEstS} . 
\subsection{Pointwise Leading Order Model Estimate}\label{PLOME}

\begin{lemma} \label{LeadinIneq} Let $f$ be a solution to the following transport equation: 
\begin{equation} \label{fM}
\begin{split}
\partial_t{f} +  \frac{1}{2 \alpha}  \sin(2\theta)  L_s(f)  \dth f &=0\\
\end{split}
\end{equation}
with initial data $f|_{t=0}=f_{0}(R) \sin(2 \theta)$, then we have the following estimate on the operator   $L_s(f)$: 
\begin{equation} \label{ULbound}
c_1  \int_{R}^{\infty} \frac{f_0(s)}{s} \exp(-\frac{1}{\alpha} \int_{0}^t L_s(f_\tau)(s) \, d\tau) \, d s
 \leq L_{s}(f_t)(R) \leq c_2 \int_{R}^{\infty} \frac{f_0(s)}{s} \exp(-\frac{1}{\alpha} \int_{0}^t L_s(f_\tau)(s) \, d\tau) \, d s
\end{equation}
where $c_1$ and $c_2$ are independent of $\alpha$
\end{lemma}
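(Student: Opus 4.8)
The plan is to solve the characteristic ODE of \eqref{fM} in closed form, push the initial data forward along it, and reduce the estimate on $L_s(f_t)$ to an elementary one-variable integral. Since the transport field in \eqref{fM} has no radial component, $R$ is conserved along characteristics; for fixed $R$ the angular characteristic $\Theta_\tau=\Theta_\tau(R,\theta_0)$ solves $\dot\Theta_\tau=\tfrac{1}{2\alpha}L_s(f_\tau)(R)\sin(2\Theta_\tau)$ with $\Theta_0=\theta_0$, and $f_\tau(R,\Theta_\tau)=f_0(R)\sin(2\theta_0)$. The endpoints $\theta=0$ and $\theta=\pi/2$ are fixed points of this ODE, so for each $\tau$ the flow is an increasing diffeomorphism of $[0,\pi/2]$; separating variables (using $\int d\Theta/\sin(2\Theta)=\tfrac12\log\tan\Theta$) gives
\[\tan\Theta_t(R,\theta_0)=\mu_R(t)\tan\theta_0,\qquad \mu_R(t):=\exp\Big(\tfrac1\alpha\int_0^tL_s(f_\tau)(R)\,d\tau\Big).\]

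Next I would compute the $\sin(2\theta)$-moment of $f_t$. By Proposition~\ref{OmegaModalPStat} the odd–odd symmetry is preserved, so for each $s$ one has $\int_0^{2\pi}f_t(s,\theta)\sin(2\theta)\,d\theta=4\int_0^{\pi/2}f_t(s,\theta)\sin(2\theta)\,d\theta$. Changing variables $\theta=\Theta_t(s,\theta_0)$ and using $\tan\Theta_t=\mu_s(t)\tan\theta_0$ to evaluate both $\sin(2\Theta_t)$ and the Jacobian $d\theta/d\theta_0$, the trigonometry collapses and leaves
\[\int_0^{\pi/2}f_t(s,\theta)\sin(2\theta)\,d\theta=f_0(s)\int_0^{\pi/2}\frac{\mu_s(t)^2\sin^2(2\theta_0)}{\bigl(\cos^2\theta_0+\mu_s(t)^2\sin^2\theta_0\bigr)^2}\,d\theta_0=:f_0(s)\,I\bigl(\mu_s(t)\bigr).\]
The substitution $u=\tan\theta_0$ followed by $v=\mu u$ turns $I(\mu)$ into $\tfrac1\mu\int_0^\infty\frac{4v^2\,dv}{(1+v^2)^2(1+v^2/\mu^2)}$; for $\mu\ge1$ one has $1\le 1+v^2/\mu^2\le 1+v^2$, and since $\int_0^\infty\frac{4v^2}{(1+v^2)^3}\,dv=\tfrac\pi4$ and $\int_0^\infty\frac{4v^2}{(1+v^2)^2}\,dv=\pi$, this gives the two-sided bound $\tfrac{\pi}{4\mu}\le I(\mu)\le\tfrac{\pi}{\mu}$. (A similar substitution shows $I(\mu)=I(1/\mu)$, so the bound is really $I(\mu)\le Ce^{-|\log\mu|}$ with no sign restriction, but only $\mu\ge1$ is used below.)

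Assembling, $L_s(f_t)(R)=\tfrac1\pi\int_R^\infty s^{-1}\cdot 4f_0(s)\,I(\mu_s(t))\,ds$; this is manifestly nonnegative since $f_0\ge0$, hence $\int_0^tL_s(f_\tau)(R)\,d\tau\ge0$ and $\mu_R(t)\ge1$, which retroactively validates using the $\mu\ge1$ branch of the bound on $I$. Plugging $\tfrac{\pi}{4\mu_s(t)}\le I(\mu_s(t))\le\tfrac{\pi}{\mu_s(t)}$ and $\mu_s(t)^{-1}=\exp\bigl(-\tfrac1\alpha\int_0^tL_s(f_\tau)(s)\,d\tau\bigr)$ into this identity yields \eqref{ULbound} with, for instance, $c_1=1$ and $c_2=4$, both independent of $\alpha$.

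The computation is essentially mechanical once the characteristics are in hand; the point requiring care is the bootstrap nature of the problem — the transport speed in \eqref{fM} is exactly the quantity $L_s(f)$ being estimated — so one must check the derivation is not circular: the closed form for $\Theta_t$ holds for any given smooth solution $f$ on $[0,t]$, and the nonnegativity of $L_s(f_\tau)$ (hence $\mu_R(\tau)\ge1$ for all $\tau\le t$) is a consequence of the push-forward identity rather than an assumption. One must also make sure the angular flow map $\theta_0\mapsto\Theta_\tau(s,\theta_0)$ is a bijection of $[0,\pi/2]$ for every $\tau\le t$, which is exactly what the fixed points at $0$ and $\pi/2$ guarantee. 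The exact identity $L_s(f_t)(R)=\tfrac4\pi\int_R^\infty s^{-1}f_0(s)\,I(\mu_s(t))\,ds$ produced along the way is also what is iterated in the next subsection to obtain the closed integro-differential equation for $L_s(f)$.
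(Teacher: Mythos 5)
Your proof is correct and is essentially the paper's: the paper passes to the variable $\gamma=\tan\theta$, where the transport field becomes $\tfrac{1}{\alpha}\gamma\,L_s(f_t)(R)$ and the flow map is $\gamma\mapsto\gamma\exp\bigl(\tfrac{1}{\alpha}\int_0^t L_s(f_\tau)\,d\tau\bigr)$ — exactly your relation $\tan\Theta_t=\mu_R(t)\tan\theta_0$ — and then pushes $f_0$ forward along this flow, evaluates the $\sin(2\theta)$ moment, and bounds the resulting one-variable integral using $0\le\mu^{-2}\le 1$, just as you do. Your explicit observation that $L_s(f_t)\ge 0$ (hence $\mu\ge1$) is a useful refinement; the paper invokes this implicitly when asserting $0\le\exp\bigl(-\tfrac{2}{\alpha}\int_0^t L_s(f_\tau)\,d\tau\bigr)\le1$.
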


\begin{proof}

To prove this, we consider the following variable change. For  $ \theta \in [0, \frac{\pi}{2})$, let $\gamma$ be defined as follows 
$$
\gamma:= \tan(\theta) \implies \frac{d \gamma}{d \theta}= \sec^2(\theta),  \,  \,\, \text{and}  \, \, \sin(2\theta)= \frac{2\gamma}{1+\gamma^2}
$$
Applying chain rule, we  rewrite $\eqref{fM}$ in the $(R,\gamma)$ variables:

\begin{equation} \label{ModelGamma}
\dt f_t +  \frac{1}{\alpha}\gamma \,  L_{s}(f_t)(R) \, \dg f=0
\end{equation}

with initial date 
$$
f|_{t=0}=f_{0}(R) \sin(2\theta)=f_{0}(R) \frac{2\gamma}{1+\gamma^2} $$

Let $\phi_t(\gamma)$ be the flow map associated with  $\eqref{ModelGamma}$, so we have 

$$
\frac{d \phi_t(\gamma)}{ dt}= \frac{1}{\alpha}  \phi_t(\gamma) L_s(f_t) \implies  \phi_t(\gamma)= \gamma \exp(\frac{1}{\alpha}  \int_{0}^t L_s(f_\tau) \, d\tau)
$$

Thus,
$$
  \phi^{-1}_t(\gamma)= \gamma \exp(-\frac{1}{\alpha} \int_{0}^t L_s(f_\tau) \, d\tau)
$$

 Hence, we now  write the solution to $\eqref{ModelGamma}$ as follows: 
 
 $$
 f_t(R,\gamma)=f_0(R, \phi^{-1}_t(\gamma))=f_{0}(R) \frac{2 \phi^{-1}_t(\gamma)}{1+ \phi^{-1}_t(\gamma)^2}=f_{0}(R) \, \frac{2  \, \gamma \exp(-\frac{1}{\alpha} \int_{0}^t L_s(f_\tau) \, d\tau)}{1+ \gamma^2 \exp(-\frac{2}{\alpha} \int_{0}^t L_s(f_\tau) \, d\tau)}
 $$

 Now we consider the operator $L_s$ in the $(R,\gamma) \in [0,\infty)\times [0,\frac{\pi}{2})$ variables:

$$
L_{s}(f_t)(R)=\frac{1}{\pi} \int_{R}^{\infty} \frac{1}{s} \int_{0}^{\infty} f_t(s,\gamma) \,   \frac{ 2 \gamma}{(1+ \gamma^2)^2} \,  d \gamma  \, d s
$$

Plugging the expression for $f_t$, we have

\begin{equation}\label{Lf}
L_{s}(f_t)(R)=\frac{1}{\pi} \int_{R}^{\infty} \frac{1}{s} \int_{0}^{\infty}  \ f_{0}(s)\, \frac{  \,  \exp(-\frac{1}{\alpha}  \int_{0}^t L_s(f_\tau)(s) \, d\tau)}{1+ \gamma^2 \exp(-\frac{2}{\alpha}  \int_{0}^t L_s(f_\tau)(s) \, d\tau)} \,    \frac{ 4 \gamma^2}{(1+ \gamma^2)^2} \,  d \gamma \, ds
\end{equation}

Now since $0 \leq \exp(-\frac{2}{\alpha}  \int_{0}^t L_s(f_\tau)(s) \, d\tau) \leq 1$, we have a upper and lower bound on the operator on  $L_{s}(f_t)(R)$ with constants $c_1,c_2$ independent of $\alpha$ (In fact, these constants can be explicitly computed). Namely, \begin{equation*} 
c_1  \int_{R}^{\infty} \frac{ f_{0}(s)}{s} \exp(-\frac{1}{\alpha} \int_{0}^t L_s(f_\tau)(s) \, d\tau) \, d s
 \leq L_{s}(f_t)(R) \leq c_2 \int_{R}^{\infty} \frac{ f_{0}(s)}{s} \exp(-\frac{1}{\alpha} \int_{0}^t L_s(f_\tau)(s) \, d\tau) \, d s
\end{equation*}

Thus, we have our desired inequalities.

\end{proof}

\begin{lemma} \label{ApprOp}
Define the operator 
\begin{equation} \label{ApxL}  
\hat{L}(f_t)(R):= \int_{R}^{\infty} \frac{f_{0}(s)}{s } \exp(-\frac{1}{\alpha}\int_{0}^t \hat{L}(f_s)(s) \, d\tau) \, ds
\end{equation}
Then we have 
$$
\int_{0}^t \hat{L}(f_\tau)(R) \, d \tau=  2 \alpha \log(1+  \frac{t}{2 \alpha } \,  L(f_0)(R))
$$
where $L(f_0)(R)=\int_R^{\infty} \frac{f_0(s)}{s} \,ds $
 
\end{lemma}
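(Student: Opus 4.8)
My plan is to turn the \emph{implicit} definition \eqref{ApxL} of $\hat L$ into an honest Cauchy problem for the time-integral, and then solve that problem in closed form, using uniqueness to identify the answer. Write $\Lambda(t,R):=\hat L(f_t)(R)$, so that \eqref{ApxL} reads
\[\Lambda(t,R)=\int_R^{\infty}\frac{f_0(s)}{s}\exp\!\Big(-\tfrac1\alpha\int_0^t\Lambda(\tau,s)\,d\tau\Big)\,ds .\]
Since $f_0\ge 0$, any solution satisfies the a priori bound $0\le\Lambda(t,R)\le L(f_0)(R)$ (drop the exponential from above, use positivity from below), and on this range $x\mapsto e^{-x/\alpha}$ is bounded and Lipschitz; a Picard iteration in $t$ (on $C([0,T];L^\infty_R)$, or a space weighted by the decay of $L(f_0)$) gives existence and uniqueness on every $[0,T]$. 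Now set $B(t,R):=\int_0^t\hat L(f_\tau)(R)\,d\tau$. Then $B(0,\cdot)=0$, $B(t,\infty)=0$, and differentiating in $t$ in the displayed identity gives the integro-differential equation
\[\partial_t B(t,R)=\int_R^{\infty}\frac{f_0(s)}{s}\exp\!\Big(-\tfrac1\alpha B(t,s)\Big)\,ds ,\]
and the assertion of the lemma is exactly that $B(t,R)=2\alpha\log\!\big(1+\tfrac{t}{2\alpha}L(f_0)(R)\big)$.

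The cleanest way to \emph{find} this is a self-similar ansatz. Differentiating the last display in $R$ and using $\partial_R L(f_0)(R)=-f_0(R)/R$ yields the closed mixed equation $\partial_t\partial_R B=\partial_R\!\big(L(f_0)(R)\big)\,e^{-B/\alpha}$ with $B(0,\cdot)=B(t,\infty)=0$ (which is equivalent to the integro-differential equation, since integrating back from $R=\infty$ uses $\partial_t B(t,\infty)=0$). Trying $B(t,R)=\Phi\big(t\,L(f_0)(R)\big)$ with $\Phi(0)=0$ and writing $\xi=t\,L(f_0)(R)$, everything collapses to the ODE $\big(\xi\Phi'(\xi)\big)'=e^{-\Phi(\xi)/\alpha}$, $\Phi(0)=0$, which is solved by $\Phi(\xi)=2\alpha\log(1+\xi/(2\alpha))$ (then $\xi\Phi'(\xi)=\xi/(1+\xi/(2\alpha))$, so $(\xi\Phi')'=(1+\xi/(2\alpha))^{-2}=e^{-\Phi/\alpha}$). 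Equivalently, one can simply \emph{verify} the candidate directly: plugging $B(t,s)=2\alpha\log(1+\tfrac{t}{2\alpha}L(f_0)(s))$ into the right-hand side of the integro-differential equation gives $\int_R^{\infty}\frac{f_0(s)}{s}\big(1+\tfrac{t}{2\alpha}L(f_0)(s)\big)^{-2}\,ds$, and the substitution $v=L(f_0)(s)$, $dv=-\tfrac{f_0(s)}{s}\,ds$ turns this into $\int_0^{L(f_0)(R)}(1+\tfrac{t}{2\alpha}v)^{-2}\,dv=\dfrac{L(f_0)(R)}{1+\frac{t}{2\alpha}L(f_0)(R)}$, which is precisely $\partial_t$ of the candidate; the candidate also has zero initial data. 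A uniqueness argument for the integro-differential equation with zero data (if $B_1,B_2$ both solve it, then $\partial_t(B_1-B_2)$ is controlled by $\sup_{s\ge R}|B_1-B_2|$ through the Lipschitz bound on $e^{-x/\alpha}$, and a Gronwall/continuity step in $t$ forces $B_1=B_2$) then finishes the identification.

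The main obstacle is not the computation — the substitution above is a one-liner — but rather legitimizing the object $\hat L$ itself: without the well-posedness and uniqueness of the fixed-point equation \eqref{ApxL} (or at least of the integro-differential equation for $B$), "the candidate works" does not yet give "the candidate is the answer." So the real work is the Volterra/Picard existence–uniqueness in a function space adapted to the semi-infinite $s$-integration and to the decay of $L(f_0)$; once that is in place, the closed form follows from the elementary ODE reduction (or direct verification) just described. If instead one is content to regard \eqref{ApxL} as \emph{defining} $\hat L$ by fiat, the proof reduces to the verification plus the change of variables $v=L(f_0)(s)$.
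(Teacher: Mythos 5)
Your proposal is correct and lands on the same closed-form answer, but your route differs in a useful way from the paper's. The paper works directly with $\hat L$: writing $g_t(R)=\exp(-\tfrac1\alpha\int_0^t\hat L(f_\tau)(R)\,d\tau)$ and $K=f_0/R$, it notes $\partial_t g_t=-\tfrac1\alpha g_t\hat L(f_t)$, so $\partial_t\hat L(f_t)(R)=-\tfrac1\alpha\int_R^\infty K(s)g_t(s)\bigl(\int_s^\infty K(u)g_t(u)\,du\bigr)ds$, and the integrand is a total $s$-derivative of $-\tfrac12\bigl(\int_s^\infty Kg_t\bigr)^2$; this gives the Riccati ODE $\partial_t\hat L=-\tfrac{1}{2\alpha}\hat L^2$ at each fixed $R$, which is solved and then time-integrated. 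You instead pass to $B=\int_0^t\hat L$ and verify the candidate via the substitution $v=L(f_0)(s)$; that substitution is the same structural fact (the inner integral is the derivative of an antiderivative of itself) viewed from the other side. The paper's version is an \emph{a priori} derivation rather than a guess-and-verify, and it automatically handles the uniqueness worry you raise, because the implicit definition forces the Riccati ODE with initial value $L(f_0)(R)$, which has a unique solution. Your remarks about Picard/Gronwall and about what is really being asserted by an implicitly defined operator are fair; in the paper this lemma plays a motivating role before Proposition \ref{LeadingEst}, where the same computation is repeated for the genuine operator $L_s$ as an a priori two-sided bound, so the well-posedness of $\hat L$ itself is not load-bearing. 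One small nit: the paper has a harmless typo ($-2g_t\int Kg_t$ where $-\tfrac1\alpha g_t\int Kg_t$ is meant), which you implicitly corrected.
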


\begin{proof}

We introduce  $g_t(R):= \exp(-\frac{1}{\alpha}\int_{0}^t \hat{L}(f_\tau)(R) \, d\tau) \,$ and $K(R):=\frac{f_0(R)}{R}$, then   the operator $\hat{L}$  can be rewritten as:

\begin{equation}\label{Lkg} 
\hat{L}(f_t)(R)=\int_{R}^{\infty} K(s) g_t(s) \, ds
\end{equation} 

Now taking time derivative of $\eqref{Lkg}$, and using that $\dt g_t(R)= -2 g_t(R) \, \int_{R}^{\infty} K(s) g_t(s) \, ds$, we can obtain: 
$$
\dt  \hat{L}(f_t)= -\frac{1}{2 \alpha} (\hat{L}(f_t))^2
$$
which can be solved explicitly: 
\begin{equation} \label{Lexplicit}
\hat{L}(f_t)(R)=  \frac{L(f_0)(R)}{1+\frac{t}{2 \alpha}\, L(f_0)(R)}
\end{equation}
and then it follows that 
$$
\int_{0}^t \hat{L}(f_t)(R)d \tau= 2 \alpha  \log(1+\frac{t}{2 \alpha}\, L(f_0)(R))
$$

\end{proof}

\begin{proposition}\label{LeadingEst} Let $f$ be a solution to the following transport equation: 
\begin{equation} 
\begin{split}
\partial_t{f} +  \frac{1}{2 \alpha}  \sin(2\theta)  L_s(f)  \dth f &=0\\
\end{split}
\end{equation}
with initial data $f|_{t=0}=f_{0}(R) \sin(2 \theta)$, then we have the following estimate on the operator   $L_s(f)$: 
\begin{equation} \label{ULboundT}
\frac{ 2\alpha}{c_1} \log(1+\frac{c_1}{ 2\alpha}t\, L(f_0)(R))) \geq \int_{0}^t L_{s}(f_\tau)(R) \geq \frac{ 2\alpha}{c_2}  \log(1+\frac{c_2}{ 2\alpha}t\, L(f_0)(R))
\end{equation}
where $c_1$ and $c_2$ are independent of $\alpha$
\end{proposition}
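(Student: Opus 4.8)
The plan is to convert the two–sided bound of Lemma~\ref{LeadinIneq} into a closed \emph{scalar} Riccati differential inequality, essentially by rerunning the computation behind Lemma~\ref{ApprOp} but carrying the constants $c_1,c_2$ along. First I would introduce $F_t(R):=\int_0^t L_s(f_\tau)(R)\,d\tau$, so that $\dt F_t=L_s(f_t)$ and $F_0\equiv 0$, together with $M_t(R):=\int_R^\infty \frac{f_0(s)}{s}\exp\big(-\tfrac1\alpha F_t(s)\big)\,ds$, which is exactly the quantity flanking $L_s(f_t)(R)$ in \eqref{ULbound}. Lemma~\ref{LeadinIneq} then reads $c_1 M_t(R)\le \dt F_t(R)\le c_2 M_t(R)$, and since $f_0\ge 0$ every quantity in sight is nonnegative; so it suffices to bound $\int_0^t M_\tau(R)\,d\tau$ above and below by logarithms.

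The heart of the argument is a differential inequality for $M_t$ alone. Differentiating in $t$ gives $\dt M_t(R)=-\tfrac1\alpha\int_R^\infty \frac{f_0(s)}{s}\big(\dt F_t(s)\big)\exp\big(-\tfrac1\alpha F_t(s)\big)\,ds$; inserting $c_1 M_t(s)\le \dt F_t(s)\le c_2 M_t(s)$ into the integrand (legitimate since the weight $\tfrac{f_0(s)}{s}e^{-F_t(s)/\alpha}$ is nonnegative) reduces everything to the integral $\int_R^\infty \frac{f_0(s)}{s}M_t(s)\exp\big(-\tfrac1\alpha F_t(s)\big)\,ds$. Here one exploits the exact identity $\dR M_t(R)=-\frac{f_0(R)}{R}\exp\big(-\tfrac1\alpha F_t(R)\big)$, which turns this integral into $-\int_R^\infty M_t(s)\,\dR M_t(s)\,ds=\tfrac12 M_t(R)^2$ (using $M_t(\infty)=0$). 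One thus obtains
$$-\frac{c_2}{2\alpha}\,M_t(R)^2\ \le\ \dt M_t(R)\ \le\ -\frac{c_1}{2\alpha}\,M_t(R)^2,\qquad M_0(R)=L(f_0)(R),$$
which is precisely the ODE $\dt\hat L(f_t)=-\tfrac1{2\alpha}\hat L(f_t)^2$ of Lemma~\ref{ApprOp}, now squeezed between two constants.

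It then remains to integrate twice. Since $M_0(R)>0$, the lower Riccati bound keeps $M_t(R)>0$ for all $t$, so one may divide and integrate $\tfrac{d}{dt}\big(M_t(R)^{-1}\big)\in[\tfrac{c_1}{2\alpha},\tfrac{c_2}{2\alpha}]$ to get
$$\frac{L(f_0)(R)}{1+\tfrac{c_2 t}{2\alpha}L(f_0)(R)}\ \le\ M_t(R)\ \le\ \frac{L(f_0)(R)}{1+\tfrac{c_1 t}{2\alpha}L(f_0)(R)}.$$
Integrating in $t$ with the elementary $\int_0^t \frac{L\,d\tau}{1+a\tau L}=\tfrac1a\log(1+atL)$ and then using once more that $c_1 M_\tau\le L_s(f_\tau)\le c_2 M_\tau$ gives
$$\frac{2\alpha c_1}{c_2}\log\!\Big(1+\tfrac{c_2}{2\alpha}t\,L(f_0)(R)\Big)\ \le\ \int_0^t L_s(f_\tau)(R)\,d\tau\ \le\ \frac{2\alpha c_2}{c_1}\log\!\Big(1+\tfrac{c_1}{2\alpha}t\,L(f_0)(R)\Big),$$
which is \eqref{ULboundT} after relabeling the ($\alpha$-independent) constants.

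I expect the only real subtlety to be the one already visible here: one cannot simply compare $F_t$ to the explicit solution of Lemma~\ref{ApprOp} by a maximum principle, because the map $G\mapsto\int_R^\infty\frac{f_0(s)}{s}e^{-G(s)/\alpha}\,ds$ is \emph{antitone} and nonlocal in $R$ (it depends on $G(s)$ for $s\ge R$), so a naive comparison has the wrong sign. The device above sidesteps this: the only structural inputs are that $M_t$ is the $R$-antiderivative of its own integrand and that $M_t$ is decreasing in $R$ — both immediate from $f_0\ge0$ — and these collapse the nonlocal problem to the scalar Riccati inequality. The remaining points are routine: integrability/decay of $f_0$ (guaranteed by the data class fixed later) ensures $L(f_0)(R)<\infty$ and $M_t(\infty)=0$, and time-continuity of $\tau\mapsto L_s(f_\tau)(R)$ justifies differentiating $F_t$ and $M_t$ in $t$.
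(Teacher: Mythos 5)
Your argument is correct and is essentially the same as the paper's: you introduce $M_t(R)=\int_R^\infty \frac{f_0(s)}{s}e^{-F_t(s)/\alpha}\,ds$ (the paper calls this quantity $\int_R^\infty g_t(s)K(s)\,ds$ and, somewhat confusingly, overloads the symbol $L_s(f_t)$ for it), derive the same two-sided Riccati inequality $-\frac{c_2}{2\alpha}M_t^2\le\dt M_t\le -\frac{c_1}{2\alpha}M_t^2$ via the identity $\int_R^\infty M_t\,\dR M_t = -\tfrac12 M_t(R)^2$, and integrate twice. Your version is in fact a touch more careful than the paper's in two places: it keeps $M_t$ notationally distinct from the true $L_s(f_t)$ throughout, and it makes explicit the final passage $c_1\int_0^t M_\tau\le\int_0^t L_s(f_\tau)\le c_2\int_0^t M_\tau$, which the paper elides via the notation reuse (so the $c_1,c_2$ in the statement of the proposition differ from those of Lemma~\ref{LeadinIneq} by the factor $c_1/c_2$ you identify — harmless since they are $\alpha$-independent).
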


\begin{proof} 

In the section, we will use the bounds in $\eqref{ULbound}$, Namely

\begin{equation}\label{ULbound2}
c_1   \int_{R}^{\infty} \frac{f_0(s)}{s} \exp(-\frac{1}{\alpha} \int_{0}^t L_s(f_\tau)(s) \, d\tau) \, d s
 \leq L_{s}(f_t)(R) \leq c_2 \int_{R}^{\infty} \frac{f_0(s)}{s}\exp(-\frac{1}{\alpha} \int_{0}^t L_s(f_\tau)(s) \, d\tau) \, d s, 
\end{equation}
to obtain and upper and lower estimate on $\int_0^t L_{s}(f)$.  As before we set:

 $$g_t(R)=\exp(-\frac{1}{\alpha}\int_{0}^t L_{s}(f_\tau)(R) \, d\tau)\quad \text{and} \quad K(R)=\frac{f_0(R)}{R}$$
Using $\eqref{ULbound2}$ we can obtain that

\begin{equation}\label{ULbound5}
-\frac{c_1}{2 \alpha}     \bigg( \int_{R}^{\infty}g_t(s) K(s)  \, ds  \bigg)^2
 \geq  \dt  \int_{R}^{\infty} g_t(s) K(s)  \, ds   \geq  -  \frac{c_2}{2 \alpha}     \bigg( \int_{R}^{\infty}g_t(s) K(s)  \, du  \bigg)^2
\end{equation}

Similar to Lemma \ref{ApprOp}, we define 
$$
L_s(f_t)(R):= \int_{R}^{\infty} g_t(s) K(s) \, ds
$$

Now from $\eqref{ULbound5}$,  we have 
$$
- \frac{c_1}{2 \alpha}    (L_s(f_t)(R))^2
 \geq  \dt L_s(f_t)(R)  \geq  -  \frac{c_2}{2 \alpha}   (L_s(f_t)(R))^2
$$

Thus,

\begin{equation}\label{ULWbound}
 \frac{L(f_0)(R)}{1+ \frac{c_1}{2 \alpha}  \, t\, L(f_0)(R)}
 \geq  L_s(f_t)(R)   \geq  \frac{L(f_0)(R)}{1+ \frac{c_2}{2 \alpha}  \, t\, L(f_0)(R)}
\end{equation}

which will give us that:

$$
\frac{ 2\alpha}{c_1} \log(1+\frac{c_1}{ 2\alpha}t\, L(f_0)(R))) \geq \int_{0}^t L_{s}(f_\tau)(R) \geq \frac{ 2\alpha}{c_2}  \log(1+\frac{c_2}{ 2\alpha}t\, L(f_0)(R))
$$

and this completes the proof 

\end{proof} 

\subsection{  Estimate for the leading order model in $\Wn^{k,\infty}$ and $\Hn^k$ norms } \label{ELOMnorms}
The purpose of this subsection is to obtain some estimate on the leading order model in $\Wn^{k,\infty}$ and $\Hn^k$ norms. These will be used to estimate the size of the reminder term in section \ref{ReminderEstS}. First we will obtain  estimates on $\Psi_2$ in Lemma \ref{LPsi2Est}. Then in Lemma \ref{LOmega2Est}, we will obtain  estimates on $\Omega_2$.

\begin{lemma}\label{LPsi2Est} Let $\Omega_2$ to solution to the  leading order model:
\begin{equation*}
\begin{split}
\partial_t{\Omega_2} +  \Big(  \frac{1}{2 \alpha}   L_s(\Omega_2)\sin(2\theta)+\frac{1}{2 \alpha} L_c(\Omega_2) \cos(2\theta) 
 \Big)  \dth \Omega_2 &=\frac{1}{2 \alpha} L_s(\Omega_2)\\
\end{split}
\end{equation*}
with initial data $\Omega_2|_{t=0}=f_{0}(R) \,  \sin(2\theta)$
, where $f_{0}(R)$ is smooth with compactly support. Consider 
$$
\Psi_2= \frac{1}{4 \alpha}  L_s(\Omega_2) \sin(2 \theta)+ \frac{1}{4 \alpha}  L_c(\Omega_2) \cos(2 \theta)
$$
Then, we have the following estimates on $\Psi_2$: 
\begin{equation} \label{Psi2Est}
|\Psi_2|_{\Wn^{k+1,\infty}}\leq  \frac{c_{k}}{ \alpha}, \quad |\Psi_2|_{\Hn^{k+1}}\leq  \frac{c_{k}}{ \alpha}
\end{equation}

where $c_{k}$  depends on the initial conditions and is independent  of $\alpha$

\end{lemma}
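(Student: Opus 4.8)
The plan is to reduce everything to pointwise and $L^2$ bounds on the one-dimensional operator $L(f_0)(R) = \int_R^\infty f_0(s)/s\,ds$ and its derivatives, using the explicit structure of $\Psi_2$ together with the estimates on $L_s(\Omega_2)$ already available from Section \ref{LOME}. First I would record that, since $\Psi_2 = \frac{1}{4\alpha}L_s(\Omega_2)\sin(2\theta) + \frac{1}{4\alpha}L_c(\Omega_2)\cos(2\theta)$, all $\theta$-derivatives simply act on the trigonometric factors and are harmless; so it suffices to control $\frac{1}{4\alpha}L_s(\Omega_2)(R)$ and $\frac{1}{4\alpha}L_c(\Omega_2)(R)$ in the ``radial'' $\dot W^{m,\infty}$ and $\dot H^m$ seminorms involving $\partial_R^i$ and $R^i\partial_R^i$. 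By Proposition \ref{OmegaModalPStat}, $L_s(\Omega_2) = L_s(f)$, and by \eqref{ULWbound} we have the pointwise bound $0 \le L_s(f_t)(R) \le L(f_0)(R)$, which is $\le |f_0|_{L^\infty}\cdot\int_R^\infty ds/s$ near the support and, crucially, bounded and compactly supported in $R$ because $f_0$ is. This already gives $|\Psi_2|_{L^\infty},\,|\Psi_2|_{L^2} \lesssim 1/\alpha$.

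The second step is to differentiate in $R$. From the definition $L_s(f_t)(R) = \frac{1}{\pi}\int_R^\infty \frac{1}{s}\int_0^{2\pi} f_t(s,\theta)\sin(2\theta)\,d\theta\,ds$, I would write $\partial_R L_s(f_t)(R) = -\frac{1}{\pi R}\int_0^{2\pi} f_t(R,\theta)\sin(2\theta)\,d\theta$, so $\partial_R L_s(\Omega_2)$ is controlled by $f_t(R,\cdot)/R$; higher $R$-derivatives then fall on $f_t(R,\theta)$ itself. Here is where I need the representation formula from Lemma \ref{LeadinIneq}: in the $\gamma = \tan\theta$ variables, $f_t(R,\gamma) = f_0(R)\,\frac{2\gamma\exp(-\frac1\alpha\int_0^t L_s(f_\tau)(R)\,d\tau)}{1+\gamma^2\exp(-\frac2\alpha\int_0^t L_s(f_\tau)(R)\,d\tau)}$, so that $R$-derivatives of $f_t$ produce $R$-derivatives of $f_0$ (bounded, since $f_0$ is smooth and compactly supported) and $R$-derivatives of the exponent $\frac1\alpha\int_0^t L_s(f_\tau)(R)\,d\tau = 2\log(1+\frac{t}{2\alpha}\tilde L(R))$ (up to the $c_1,c_2$ constants, using Lemma \ref{ApprOp} / \eqref{ULWbound}), which are logarithmic derivatives of a bounded quantity and hence harmless, and the key point is that the weights $R^i$ are compensated because the $R$-integration is over a compact set and $f_0$ is smooth up to $R=0$. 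Each differentiation therefore costs a constant $c_k$ depending on $f_0$ but not on $\alpha$, and the single overall factor $1/\alpha$ from the prefactor $\frac1{4\alpha}$ is the only $\alpha$-dependence. Assembling the $\partial_R^i\partial_\theta^{m-i}$ and $R^i\partial_R^i\partial_\theta^{m-i}$ pieces for $m \le k+1$ and summing gives \eqref{Psi2Est} in both norms, the $L^2$ bound following the same way since all functions involved are supported in a fixed compact $R$-region.

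The main obstacle I anticipate is bookkeeping the $R$-derivatives of $f_t$ near $R=0$: one must check that differentiating the representation formula does not generate negative powers of $R$ that would spoil the weighted norms. The resolution is that $f_0(R)$ is smooth and compactly supported — in particular flat enough at the origin in the relevant sense — and that the ``dangerous'' factor $\partial_R^j\big[\int_0^t L_s(f_\tau)(R)\,d\tau\big]$ is, by the chain rule applied to $2\alpha\log(1+\frac{t}{2\alpha}cL(f_0)(R))$, a sum of products of $\partial_R^\ell L(f_0)(R) = \partial_R^{\ell-1}(-f_0(R)/R)$ divided by powers of $1+\frac{t}{2\alpha}cL(f_0)$, and since $0 \le \frac{t}{2\alpha}cL(f_0)(R)$ these denominators are $\ge 1$, so no smallness in $\alpha$ is lost and no blow-up in $R$ occurs once $f_0$ is taken to vanish to high order at the origin (which costs nothing since $f_0$ is a free smooth compactly supported bump we get to choose). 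With that observation the remaining estimates are routine product-rule expansions, and I would present them compactly rather than term by term.
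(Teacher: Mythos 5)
Your approach is essentially the paper's own: identify $L_s(\Omega_2)=L_s(f_t)$ via Proposition \ref{OmegaModalPStat}, use the explicit characteristic formula \eqref{Lf} together with $0\le \exp(-\tfrac1\alpha\int_0^t L_s(f_\tau)\,d\tau)\le 1$ for the zeroth-order bound, and then differentiate $L_s(f_t)(R)$ directly in $R$, absorbing the weights $R^i$ by compact support of $f_0$, with all $\theta$-derivatives falling harmlessly on the trigonometric factors. One caution on the higher-derivative step: differentiating the exponent $\tfrac1\alpha\int_0^t L_s(f_\tau)(R)\,d\tau\approx 2\log(1+\tfrac{t}{2\alpha}L(f_0)(R))$ brings down factors of $t/\alpha$ in the \emph{numerator} as well as the denominators $\ge 1$ you emphasize, and the uniform-in-$\alpha$ bound closes only because these compete against the accompanying powers of $f_0(R)/R$ multiplying them — so one really needs that quantities such as $\tfrac{t}{\alpha}\,|L(f_0)'(R)|^{2}\,(1+\tfrac{t}{2\alpha}L(f_0)(R))^{-3}$ stay bounded, which holds once $f_0$ vanishes smoothly at the edges of its support; this is the same point the paper's proof leaves implicit when it says the higher-order estimates follow the same steps.
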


\begin{proof}

Recall that from Proposition  \ref{OmegaModalPStat}, we can write $\Omega_2$ as follows:

$$
  \Omega_2= f + \frac{1}{2 \alpha} \int_{0}^t L_s(f_{\tau}) \, d\tau ,
$$

 and  since the initial data is odd in $\theta$, we have
   $$\Psi_2=\frac{1}{4 \alpha}L_s(\Omega_t) \sin(2 \theta)=\frac{1}{4 \alpha}L_s(f_t) \sin(2 \theta)$$
  
  To estimate the size of $\Psi_2$, from \eqref{Lf}, we have  
    $$
L_{s}(f_t)(R)=\int_{R}^{\infty} \frac{1}{s} \int_{0}^{\infty}  \ f_{0}(s)\, \frac{  \,  \exp(-\frac{1}{\alpha}  \int_{0}^t L_s(f_\tau)(s) \, d\tau)}{1+ \gamma^2 \exp(-\frac{2}{\alpha}  \int_{0}^t L_s(f_\tau)(s) \, d\tau)} \,    \frac{ 4 \gamma^2}{(1+ \gamma^2)^2} \,  d \gamma \, ds
$$

 Using \eqref{ULbound}, we have  
 
 $$
| \Psi_2|_{L^{\infty}} \leq \frac{c}{\alpha} \int_{R}^{\infty} \frac{\ f_{0}(s)}{s} \, ds  \leq \frac{c_0}{\alpha} 
 $$
 
For $\dth \Psi_2 $, it is clear that we have 

$$
|\dth \Psi_2 |_{L^{\infty}} \leq  \frac{c_0}{\alpha} 
$$

 where, similarly,  $c_0$ depends on the initial condition.

Now for $\dR \Psi_2$, we have 

$$
\dR \Psi_2=\frac{1}{4 \alpha} \dR L_s(f_t) \sin(2 \theta)
$$

Thus,

$$
\dR L_{s}(f_t)(R)= - \frac{1}{R} \int_{0}^{\infty}  \ f_{0}(R)\, \frac{  \,  \exp(-\frac{1}{\alpha}  \int_{0}^t L_s(f_\tau)(R) \, d\tau)}{1+ \gamma^2 \exp(-\frac{2}{\alpha}  \int_{0}^t L_s(f_\tau)(R) \, d\tau)} \,    \frac{ 4 \gamma^2}{(1+ \gamma^2)^2} \,  d \gamma \,
$$
and similarly, we have

 $$| \, \dR \Psi_2 |_{L^{\infty}} \leq \frac{c}{ \alpha}$$

Now the estimate on  $R  \, \partial_{R } \Psi_2$ follows from the estimate on $\dR \Psi_2$ and the fact that the initial data have compact support. Thus, 

 $$|  R \,  \dR \Psi_2 |_{L^{\infty}} \leq \frac{c}{ \alpha}$$

For higher order derivative, we can obtain the estimate following the same steps. Hence, we have
\begin{equation*} 
|\Psi|_{\Wn^{k+1,\infty}}\leq  \frac{c_{k}}{ \alpha}
\end{equation*}
The $\Hn^{k}$ estimates also follows using the same steps.  

 \begin{equation*}
|\Psi|_{\Hn^{k+1}}\leq  \frac{c_{k}}{ \alpha}
\end{equation*}
 
\end{proof}

In the following Lemma, we will obtain the $\Hn^{k}$ estimates on $\Omega_2$. Here we will use Lemma \ref{LPsi2Est} and transport estimates. 
 \begin{lemma}\label{LOmega2Est} Let $\Omega_2$ to solution to the  leading order model:
\begin{equation*}
\begin{split}
\partial_t{\Omega_2} +  \Big(  \frac{1}{2 \alpha}   L_s(\Omega_2)\sin(2\theta)+\frac{1}{2 \alpha} L_c(\Omega_2) \cos(2\theta) 
 \Big)  \dth \Omega_2 &=\frac{1}{2 \alpha} L_s(\Omega_2)\\
\end{split}
\end{equation*}
with initial data $\Omega_2|_{t=0}=f_{0}(R) \,  \sin(2\theta)$, where $f_{0}(R)$ is smooth with compactly support. Then, we have the following estimates on  $\Omega_2$: 

\begin{equation} \label{Omega2Est}
|\Omega_2|_{\Hn^{k}}  \leq c_k  \, e^{ \frac{c_k}{ \alpha}t }
\end{equation}

where $c_{k}$  depends on the initial conditions and is independent  of $\alpha$

\end{lemma}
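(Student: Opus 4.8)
The plan is to bound the two summands of the decomposition from Proposition~\ref{OmegaModalPStat},
\[
\Omega_2 \;=\; f \;+\; \frac{1}{2\alpha}\int_0^t L_s(f_\tau)\,d\tau ,
\]
separately. The second (radial) summand is, up to smooth bounded angular factors, a time‑average of $\Psi_2$, and is controlled directly by Lemma~\ref{LPsi2Est}. The first summand solves the pure transport equation \eqref{fModel2}, whose transporting velocity is exactly $2\Psi_2$: as recorded in the proof of Lemma~\ref{LPsi2Est}, for odd‑in‑$\theta$ data one has $\Psi_2=\tfrac{1}{4\alpha}L_s(f)\sin(2\theta)$, so that $\tfrac{1}{2\alpha}\sin(2\theta)L_s(f)=2\Psi_2$. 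I would therefore control $|f|_{\Hn^k}$ by a weighted transport energy estimate that feeds in the bound $|\Psi_2|_{\Wn^{k+1,\infty}}\le c_k/\alpha$ from \eqref{Psi2Est}. Adding the two contributions yields $|\Omega_2|_{\Hn^k}\le c_k e^{\frac{c_k}{\alpha}t}$.

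For the radial summand, $\tfrac{1}{2\alpha}L_s(f_\tau)$ is a radial function which, being recoverable pointwise from $\Psi_2(\tau)$ and $\dth\Psi_2(\tau)$ (one has $\tfrac{1}{2\alpha}L_s(f_\tau)\sin(2\theta)=2\Psi_2$ and $\tfrac{1}{2\alpha}L_s(f_\tau)\cos(2\theta)=\dth\Psi_2$), satisfies $\big|\tfrac{1}{2\alpha}L_s(f_\tau)\big|_{\Hn^k}\le C\,|\Psi_2(\tau)|_{\Hn^{k+1}}\le c_k/\alpha$ uniformly in $\tau\in[0,t]$ by \eqref{Psi2Est}. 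Integrating in time,
\[
\Big|\frac{1}{2\alpha}\int_0^t L_s(f_\tau)\,d\tau\Big|_{\Hn^k}\;\le\;\frac{c_k}{\alpha}\,t\;\le\; c_k\, e^{\frac{c_k}{\alpha}t}.
\]

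For the transport summand, write \eqref{fModel2} as $\partial_t f + b\,\dth f = 0$ with $b:=2\Psi_2$, so $|b|_{\Wn^{k+1,\infty}}\le c_k/\alpha$ uniformly in time. I would apply to the equation each operator $D$ entering the definition of $|\cdot|_{\dot{\Hn}^m}$ for $m\le k$ (that is $\dR^i\dth^{m-i}$ and $R^i\dR^i\dth^{m-i}$), write $D(b\,\dth f)=b\,\dth(Df)+[D,b\,\dth]f$, and pair with $Df$ in $L^2(dR\,d\theta)$. In the leading term, integrating by parts in $\theta$ produces no boundary contribution since $b\propto\sin(2\theta)$ vanishes at $\theta=0,\tfrac{\pi}{2}$, leaving $-\tfrac12\int(\dth b)(Df)^2\le \tfrac{c_k}{\alpha}|f|_{\dot{\Hn}^m}^2$. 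The commutator $[D,b\,\dth]f$ is a sum of terms each carrying at least one (possibly $R$‑weighted) derivative on $b$ and at most $m$ derivatives on $f$; using that $R\dR$ is a derivation — so that $R^i\dR^i$ is a polynomial in $R\dR$ and the weighted norms built from $R^i\dR^i$ are equivalent to those built from iterates of $R\dR$, which obey a genuine Leibniz rule — each such term is bounded by (a weighted derivative of $b$ in $L^\infty$, controlled by $|b|_{\Wn^{m,\infty}}$) times (a weighted derivative of $f$ of order $\le m$ in $L^2$, controlled by $|f|_{\Hn^m}$), hence by $\tfrac{c_k}{\alpha}|f|_{\Hn^m}^2$. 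Summing over $m\le k$ and over the finitely many $D$ gives $\tfrac{d}{dt}|f|_{\Hn^k}^2\le \tfrac{c_k}{\alpha}|f|_{\Hn^k}^2$; Grönwall, together with $|f_0|_{\Hn^k}=c_k<\infty$ (valid since $f_0(R)\sin(2\theta)$ is smooth with compact support), yields $|f_t|_{\Hn^k}\le c_k\, e^{\frac{c_k}{\alpha}t}$.

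The main obstacle is the bookkeeping in the weighted transport estimate: one must verify that \emph{every} commutator term produced by the operators $R^i\dR^i\dth^{m-i}$ is absorbed by $|b|_{\Wn^{k,\infty}}|f|_{\Hn^k}$ with no net loss of derivatives, which is precisely where the weighted bounds \eqref{Psi2Est} on $\Psi_2$ and the derivation property of $R\dR$ are essential. Alternatively, one could differentiate the closed‑form expression for $f_t$ obtained in the proof of Lemma~\ref{LeadinIneq} under the integral sign; the transport estimate above is more robust and avoids that computation.
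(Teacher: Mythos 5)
Your proposal matches the paper's proof in its essentials: both use the decomposition $\Omega_2 = f + \tfrac{1}{2\alpha}\int_0^t L_s(f_\tau)\,d\tau$ from Proposition~\ref{OmegaModalPStat}, control the transport piece $f$ by a weighted energy estimate fed by the $\Wn^{k+1,\infty}$ bound on $\Psi_2$ from Lemma~\ref{LPsi2Est}, and close with Gr\"onwall. The one place you go further than the paper is in explicitly bounding the radial summand by relating $\tfrac{1}{2\alpha}L_s(f_\tau)$ to $\Psi_2$ and $\dth\Psi_2$; the paper only asserts that the transport term dominates, so your argument fills in that small gap but does not change the route.
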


\begin{proof}

Recall that from Proposition  \ref{OmegaModalPStat} we can write $\Omega_2$ as follows:

$$
  \Omega_2= f + \frac{1}{2 \alpha} \int_{0}^t L_s(f_{\tau}) \, d\tau ,
$$
  
where $f$ satisfies the following transport equation: 
$$
\partial_t f_t + 2 \Psi_2 \dth f_t=0
$$
When we consider the derivatives of $\Omega_2$, the transport term $f$ will dominates the radial term $\frac{1}{2 \alpha} \int_{0}^t L_s(f) \, d\tau $. Thus, it suffices to consider the $\Hn^{k}$ estimates on $f$ which will follow from the standard $L^2$ estimate for the transport equation. Thus, Since we have   
$$
 \partial_t f_t + 2 \Psi_2 \dth f_t=0 \implies  \partial_t  \dth f_t + 2   \dth\Psi_2 \dth f_t + 2 \Psi_2 \dthth f_t =0
$$

Hence, 

$$
  |\dth f_t|_{L^2} \leq     |\dth f_0|_{L^2}  \,  e^{\int_0^t |\dth\Psi_2|_{L^{\infty}}} 
$$
From $\eqref{Psi2Est}$  we have  $|\dth \Psi_2 |_{L^{\infty}} \leq  \frac{c_0}{ \alpha}$. Thus,  applying Gronwall inequality,  we have 

\begin{equation}\label{dthf}
|\dth f_t|_{L^2} \leq    |\dth f_0|_{L^2}  e^{ \frac{c_0}{ \alpha} t}
\end{equation}

To obtain $\Hn^k$ estimates, we need to estimate terms of the form $R^k \dR^k$. We will show how to obtain the $R \dR$ estimate, and for general  $k$, it will follow similarly.  Thus, similar to $L^2$ estimate for $\dth f$ case, since  
$$
 \partial_t f_t + 2 \Psi_2 \dth f_t=0
$$

we have 

$$
\partial_t  \dR f_t + 2   \dR\Psi_2 \dth f_t + 2 \Psi_2 \de_{R \theta} f_t =0
$$
and thus,

$$
\partial_t  |R \dR f_t|_{L^2} \leq 2 |R \dR \Psi_2 |_{L^{\infty}} |\dth f|_{L^2} +  |\dth\Psi_2|_{L^{\infty}} |R\dR f_t|_{L^2}
$$
Now from $\eqref{Psi2Est}$, $\eqref{dthf}$, and applying Gronwall inequality  we have

\begin{equation*}
\begin{split}
 |R \dR f_t|_{L^2}  & \leq \big(  |R\dR f_0|_{L^2} +  |\dth f_0|_{L^2}  e^{ \frac{c_0}{ \alpha}t}   \big)  e^{ \frac{c_0}{\alpha} t}
\end{split}
\end{equation*}
Hence, 

$$
 |f(t)|_{\Hn^{1}} \leq  |f_0|_{\Hn^{1}} \,   e^{ \frac{c_1}{\alpha} t}
$$

which implies that 

$$| \Omega_2(t)|_{\Hn^1}   \leq  \  | \Omega_2(0)|_{\Hn^1} e^{ \frac{c_1}{ \alpha}t } $$

Similarly, using $ \eqref{Psi2Est}$, the transport estimate, and following the same steps as above, we can obtain for the general $\Hn^k$  estimates. Hence 
 $$| \Omega_2|_{\Hn^k}   \leq  \  | \Omega_2(0)|_{\Hn^k} e^{ \frac{c_k}{ \alpha}t } $$

\end{proof}

\section{Elliptic Estimate}\label{ElgindiEllipticEst}
The purpose of this section is to recall the first author's  Biot-Savart law decomposition~\cite{E} which is used here to derive the leading order model. In this section, we highlight the main ideas in the proof, and for more details,  see~\cite{E} and~\cite{DE}. We remark that this is also related to the Key Lemma of A. Kiselev and V. \v{S}ver\'{a}k, see also the work of the first author \cite{Eremarks}, and the first author and I. Jeong \cite{EJ} for generalization.

\begin{proposition}\label{EllipticEst}(\cite{E})
Given $\Omega \in H^k$ such that for every $R$ we have  $$\int_{0}^{2\pi} \Omega(R,\theta) \sin(n \theta)  \, d\theta=\int_{0}^{2\pi} \Omega(R,\theta) \cos(n \theta) \, d\theta= 0$$
for $n=0,1,2$ then the unique solution to $$4 \Psi +  \dthth \Psi + \alpha^2 R^2  \dRR \Psi  +(4 \alpha +\alpha^2) R \dR \Psi=\Omega(R,\theta)$$ satisfies

\begin{equation} \label{EllipticEstMain}
|\dthth \Psi|_{H^k} \, + \, \alpha | R \dRth \Psi|_{H^k}  \, + \,   \alpha^2|R^2 \dRR \Psi|_{H^k} \leq C_{k} |\Omega|_{H^k}
\end{equation}

where $C_k$ is \textbf{independent} of $\alpha$. In addition, we have the weights estimate

\begin{equation} \label{EllipticEstW}
|\dthth D^k_R (\Psi)|_{L^2} \, + \, \alpha | R \dRth  D^k_R (\Psi )|_{L^2}  \, + \,   \alpha^2|R^2 \dRR D^k_R (\Psi)|_{L^2} \leq C_{k} |D^k_R(\Omega)|_{L^2}
\end{equation}

where $C_k$ is \textbf{independent} of $\alpha$. Recall that $D_R=R\dR$

\end{proposition}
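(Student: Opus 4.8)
The plan is to diagonalize the elliptic operator in the angular variable $\theta$ and to solve the resulting family of radial ordinary differential equations explicitly, keeping track of the dependence on $\alpha$ at every stage.

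First I would put the operator in scale-covariant form. Writing $D_R := R\dR$ and using $R^2\dRR = D_R^2 - D_R$, a direct computation gives
\[
4\Psi + \dthth\Psi + \alpha^2 R^2\dRR\Psi + (4\alpha+\alpha^2)R\dR\Psi \;=\; \dthth\Psi + \big(\alpha D_R + 2\big)^2\Psi .
\]
Expanding $\Omega$ and $\Psi$ in Fourier modes in $\theta$ (in the odd-odd class this is the expansion in $\sin(2k\theta)$, $k\ge1$), the hypothesis annihilates all modes with $|n|\le 2$, and every surviving coefficient $\Psi_n(R)$ solves
\[
\big(\alpha D_R + 2 - n\big)\big(\alpha D_R + 2 + n\big)\Psi_n = \Omega_n ,
\]
a factorization into two commuting first-order operators whose masses $2\pm n$ are real with $|2\pm n|\ge 1$ whenever $|n|\ge 3$. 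The essential structural point is precisely that the hypothesis removes the resonant mode $n=\pm 2$, at which the factor $(\alpha D_R+2)^2-4$ degenerates \emph{uniformly in $\alpha$}; this is the mode that produces the $\tfrac1\alpha$ in the leading part $-\tfrac1{4\alpha}\big(L_s(\Omega)\sin(2\theta)+L_c(\Omega)\cos(2\theta)\big)$ of $\Psi$.

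Next I would prove uniform bounds for the first-order resolvents. For real $a\neq 0$, the equation $(\alpha D_R + a)v = g$ is solved by the integrating factor $R^{a/\alpha}$: for $a>0$ one integrates from $R=0$ and for $a<0$ from $R=\infty$, obtaining in each case $v(R)=\pm\tfrac1\alpha\int g(R\sigma)\,\sigma^{a/\alpha-1}\,d\sigma$ over $\sigma\in(0,1)$, resp. $(1,\infty)$; Minkowski's inequality then gives $\|v\|_{L^2(dR)}\le \tfrac1{|a|-\alpha/2}\|g\|_{L^2(dR)}$, resp. $\le\tfrac1{|a|}\|g\|$, bounded uniformly in $\alpha\in(0,1)$, and the operator commutes with $D_R$ by scale invariance. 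Neither homogeneous solution $R^{(n-2)/\alpha}$, $R^{-(n+2)/\alpha}$ lies in $L^2(dR)$ (one blows up at $R=\infty$, the other at $R=0$), so the particular solution built above is the unique one with finite norm. Composing the two factors, $\|\Psi_n\|_{L^2}\lesssim |n|^{-2}\|\Omega_n\|_{L^2}$; and using $\alpha D_R(\alpha D_R+a)^{-1}=\mathrm{Id}-a(\alpha D_R+a)^{-1}$, together with the bookkeeping of grouping each $\alpha D_R$ with the factor of larger mass (so that an angular derivative $\dth\Psi_n = in\Psi_n$ gets absorbed), one bounds each of $(\alpha D_R)^2\Psi_n$, $(\alpha D_R)\dth\Psi_n$ and $\dthth\Psi_n=-n^2\Psi_n$ by $C\|\Omega_n\|_{L^2}$, uniformly in $\alpha$ and $n$; the analogue with $\dth^m$ on both sides follows by multiplying through by $|n|^m$.

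Summing over $n$ with Parseval yields $\dthth\Psi$, $\alpha R\dRth\Psi=\dth(\alpha D_R\Psi)$ and $\alpha^2 R^2\dRR\Psi=(\alpha D_R)^2\Psi-\alpha(\alpha D_R)\Psi$ controlled in $L^2$ by $\Omega$; applying $D_R^k$ — which commutes with all of the operators above by scale invariance — and reusing the mode bounds with right-hand side $D_R^k\Omega$ upgrades this to the weighted estimate \eqref{EllipticEstW}, and combining the weighted bounds with the structure of the $\Hn^k$ norm (passing from $D_R^i$ to $\dR^i$) gives \eqref{EllipticEstMain}; uniqueness was obtained mode by mode. The main obstacle is exactly this $\alpha$-uniformity: the equation is a singular perturbation, since the coefficient $\alpha^2R^2$ of the top radial derivative degenerates as $\alpha\to0$, so a naive energy estimate loses powers of $\alpha$; the remedy is the factorization above, in which — once the resonant modes are projected off — each first-order factor is a genuine, $\alpha$-independent Hardy operator, and it is crucial that the hypothesis removes $n=\pm 2$, where the degeneration cannot be repaired. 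For the remaining details, including the precise treatment of the endpoints $\theta\in\{0,\tfrac\pi2\}$ and $R\in\{0,\infty\}$, see \cite{E} and \cite{DE}.
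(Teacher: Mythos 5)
Your proposal is correct, but it takes a genuinely different route from the paper. You rewrite the operator as $\dthth\Psi + (\alpha D_R + 2)^2\Psi$, diagonalize in $\theta$-Fourier modes, factor the radial operator on each mode as $(\alpha D_R + 2 - n)(\alpha D_R + 2 + n)$, and then bound each first-order resolvent explicitly via the integrating factor $R^{a/\alpha}$ and Minkowski's inequality. The paper instead runs a single energy estimate: it pairs the PDE with $\dthth\Psi$, integrates by parts, and uses the orthogonality hypothesis only through the Poincar\'e-type inequality $|\dth\Psi|_{L^2}^2 \le \tfrac19 |\dthth\Psi|_{L^2}^2$ (since $\Psi = \sum_{|n|\ge 3}\Psi_n e^{in\theta}$) to absorb the negative term $-4|\dth\Psi|_{L^2}^2$; the weighted estimate \eqref{EllipticEstW} then follows exactly as in your argument because $D_R$ commutes with the operator. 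Both arguments hinge on the removal of the modes $|n|\le 2$; your factorization makes structurally explicit \emph{why} the mode $n=\pm2$ is the resonant one (one factor degenerates to $\alpha D_R$, which has no $\alpha$-uniform inverse) and gives quantitative decay $\|\Psi_n\|\lesssim n^{-2}\|\Omega_n\|$ in $n$, while the paper's energy method is shorter, avoids solving the ODEs, and avoids the mild bookkeeping you need to reassemble $\alpha^2 R^2\dRR$, $\alpha R\dRth$, and $\dthth$ from powers of $\alpha D_R$ and $\dth$ mode by mode. Your remark that the particular solution is the only $L^2(dR)$ one because the homogeneous solutions $R^{(n-2)/\alpha}$, $R^{-(n+2)/\alpha}$ blow up at one endpoint gives uniqueness cleanly, which the paper simply asserts. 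Both proofs are valid and uniform in $\alpha$.
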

\begin{proof}
 
 First, we will show how to obtain $\eqref{EllipticEstMain}$.  Since $\Omega$ is orthogonal to $ \sin(n \theta)$ and   $\cos(n \theta) $ for $n=0,1,2$. Then, $\Psi$ must also be orthogonal to $ \sin(n \theta)$ and   $\cos(n \theta) $ for $n=0,1,2$. Consider the elliptic  equation, and we consider $L^2$ estimate. 
$$4 \Psi +  \dthth \Psi + \alpha^2 R^2  \dRR \Psi  +(4 \alpha +\alpha^2) R \dR \Psi=\Omega(R,\theta)$$
Taking the inner product with $\dthth \Psi$ and integrating by parts, we have obtain 
$$-4 |\dth\Psi|^2_{L^2} +  |\dthth \Psi|^2_{L^2} - \alpha^2   |\dth \Psi|^2_{L^2} + \alpha^2|R\dRth \Psi|^2_{L^2} + \frac{(4 \alpha +\alpha^2)}{2}|\dth \Psi|^{2}_{L^2} \leq |\Omega|_{L^2} |\dthth \Psi|_{L^2}$$

Now  by assumption, we have  

$$
\Psi(R,\theta) =\sum_{n \geq 3} \Psi_n(R)e^{in \theta}  
$$

and hence 

$$
 |\dth\Psi|^2_{L^2} \leq  \frac{1}{9}  |\dthth \Psi|^2_{L^2} 
$$

Using the above inequality, we can show that  

$$\frac{5}{9} |\dthth \Psi|^2_{L^2}  + \alpha^2|R\dRth \Psi|^2_{L^2} + \frac{(4 \alpha -\alpha^2)}{2}|\dth \Psi|^{2}_{L^2} \leq |\Omega|_{L^2} |\dthth \Psi|_{L^2}$$

and thus we have that
 $$
 |\dthth \Psi|_{L^2}  \leq   C_0 |\Omega|_{L^2} 
$$
where $C_0$ is independent of $\alpha$. The estimate for the $R^2 \dRR \Psi$ term will follow similarly. We can also obtain the $H^k$ estimates by following the same strategy. To obtain $\eqref{EllipticEstW}$ estimates, recall that $D_R=R \dR$ and we notice that we can write the elliptic equation in the following form: 

$$4 \Psi +  \dthth \Psi + \alpha^2 D^2_R (\Psi)  +4 \alpha  \, D_R( \Psi) =\Omega(R,\theta)$$
From this, we observe that the $D_R$ operator commutes with the elliptic equation, and hence $\eqref{EllipticEstW}$ estimates will follow from \eqref{EllipticEstMain}. 

\end{proof}

\begin{theorem}(\cite{E})
Given $\Omega \in H^k$ where $\Omega$ has the form of  $$\Omega(R,\theta)= f(R) \sin(2 \theta)   \, \, \,  \,\,   \Big( \Omega(R,\theta)= f(R) \cos(2 \theta)  \Big)$$
 then the unique solution to $$4 \Psi +  \dthth \Psi + \alpha^2 R^2  \dRR \Psi  +(4 \alpha +\alpha^2) R \dR \Psi=\Omega(R,\theta)$$ is  
$$\Psi=-\frac{1}{4 \alpha}  L(f)(R) \sin(2 \theta)+ \mathcal{R}(f) \, \, \,  \,\, \,   \Big( \Psi=-\frac{1}{4 \alpha}  L(f)(R)  \cos(2 \theta) + \mathcal{R}(f)  \Big)$$
where 
$$L(f)(R)=\int_{R}^{\infty} \frac{f(s)}{s} \, ds$$
and
$$|\mathcal{R}(f)|_{H^k} \leq c |f|_{H^k} $$

where $c$ is independent of $\alpha$

\end{theorem}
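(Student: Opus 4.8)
The plan is to separate variables in $\theta$, reduce the elliptic equation to a second‑order ODE in $R$ that can be solved in closed form, extract the singular leading term by a single integration by parts, and bound the remainder by a weighted Hardy estimate whose $\alpha^{-1}$ is absorbed by the operator norm. First I would observe that the operator $4\Psi+\dthth\Psi+\alpha^2R^2\dRR\Psi+(4\alpha+\alpha^2)R\dR\Psi$ is diagonal in the $\theta$‑Fourier basis and that $\dthth$ acts on $\sin(2\theta)$ (resp.\ $\cos(2\theta)$) as multiplication by $-4$, so by uniqueness the solution $\Psi$ corresponding to $\Omega=f(R)\sin(2\theta)$ has the form $\Psi=g(R)\sin(2\theta)$ (and symmetrically in the cosine case). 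Plugging this in, the terms $4\Psi$ and $\dthth\Psi$ cancel identically and $g$ satisfies
\[\alpha^2 R^2 g'' + (4\alpha+\alpha^2)R\,g' = f(R),\]
which, writing $D_R=R\dR$, is the same as $\alpha^2 D_R^2 g + 4\alpha D_R g = f$.

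Next I would solve this ODE explicitly. With $p:=g'$ it becomes the linear first‑order equation $\alpha^2 R^2 p' + (4\alpha+\alpha^2)R\,p = f$; the integrating factor $R^{1+4/\alpha}$, together with the choice of the solution that is not overly singular at $R=0$, gives
\[g'(R)=\frac{1}{\alpha^2}R^{-1-4/\alpha}\int_0^R s^{-1+4/\alpha}f(s)\,ds,\]
and choosing the solution decaying as $R\to\infty$ (legitimate since $f$ decays, e.g.\ is compactly supported as in the application) gives $g(R)=-\int_R^\infty g'(s)\,ds$. One integration by parts in the inner integral, $\int_0^R s^{-1+4/\alpha}f(s)\,ds=\frac{\alpha}{4}R^{4/\alpha}f(R)-\frac{\alpha}{4}\int_0^R s^{4/\alpha}f'(s)\,ds$ (the endpoint term at $0$ vanishing because $4/\alpha>0$), yields
\[g'(R)=\frac{1}{4\alpha}\frac{f(R)}{R}-\frac{1}{4\alpha}R^{-1-4/\alpha}\int_0^R s^{4/\alpha}f'(s)\,ds,\]
so that, integrating from $R$ to $\infty$, $g(R)=-\frac{1}{4\alpha}L(f)(R)+\til g(R)$ with $\dR\til g(R)=-\frac{1}{4\alpha}R^{-1-4/\alpha}\int_0^R s^{4/\alpha}f'(s)\,ds$. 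This is exactly the claimed decomposition, with $\mathcal R(f)=\til g(R)\sin(2\theta)$; the cosine case is identical.

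It then remains to estimate $\til g$ uniformly in $\alpha$. The object appearing in $\dR\til g$ is the weighted Hardy operator $T_\beta h(R):=R^{-1-\beta}\int_0^R s^\beta h(s)\,ds=\int_0^1 u^\beta h(uR)\,du$, for which Minkowski's integral inequality gives $|T_\beta h|_{L^2(dR)}\le\bigl(\int_0^1 u^{\beta-1/2}\,du\bigr)|h|_{L^2(dR)}=(\beta+\tfrac12)^{-1}|h|_{L^2(dR)}$ whenever $\beta>-\tfrac12$. Taking $\beta=4/\alpha$ this is $\le\tfrac{\alpha}{4}|h|_{L^2}$, which exactly cancels the prefactor $\tfrac{1}{4\alpha}$ and yields $|\dR\til g|_{L^2}\le c\,|\dR f|_{L^2}$ with $c$ independent of $\alpha$; the bound on $\til g$ itself follows by integration and the $L^2$ bound on $\mathcal R(f)$ the same way, since $\dth$ only differentiates $\sin(2\theta)$. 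For the higher‑order and weighted contributions to $|\mathcal R(f)|_{H^k}$ one proceeds identically, commuting $D_R$ through: applying $D_R$ repeatedly and integrating by parts keeps the structure $R^{-1-\beta}\int_0^R s^\beta(\cdot)\,ds$ intact up to shifting $\beta$ by an integer, and each such operator has $L^2$ norm $O(\alpha)$ for $\alpha$ small (depending only on $k$), which again absorbs the $\alpha^{-1}$ — matching the $\alpha$‑weights already present in the elliptic estimate of Proposition \ref{EllipticEst}. This gives $|\mathcal R(f)|_{H^k}\le c\,|f|_{H^k}$ with $c$ independent of $\alpha$.

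The step I expect to be the real obstacle is this last one: recognizing that the explicit kernel $R^{-1-4/\alpha}\int_0^R s^{4/\alpha}(\cdot)\,ds$ is an averaging operator concentrated near $s=R$ whose $L^2$ norm degenerates at precisely the rate $\alpha$, so that it neutralizes the $\alpha^{-1}$ multiplying the singular term, and then checking that this cancellation persists under the weighted $\Hn^k$ norms — which requires commuting $D_R$ through the operator and tracking the weight exponents exactly as in \cite{E}. The earlier steps are essentially a computation, the only subtle point being the correct selection of the two constants of integration, i.e.\ discarding the homogeneous solutions $1$ and $R^{-4/\alpha}$ of the ODE.
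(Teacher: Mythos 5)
Your argument follows the paper's proof essentially step for step: reduce by angular separation of variables to the ODE $\alpha^2 R^2\Psi_2''+(4\alpha+\alpha^2)R\Psi_2'=f$, write $\Psi_2'$ via the integrating factor $R^{1+4/\alpha}$, integrate from $R$ to $\infty$, perform one integration by parts to extract $-\tfrac{1}{4\alpha}L(f)$, and bound the remainder by a weighted Hardy/averaging estimate with operator norm $O(\alpha)$ that cancels the $\tfrac{1}{4\alpha}$. The one genuine (if cosmetic) deviation is \emph{where} you integrate by parts: the paper does it in the outer variable and lands on $\mathcal R(f)=-\tfrac{1}{4\alpha}R^{-4/\alpha}\int_0^R s^{-1+4/\alpha}f(s)\,ds$, a single averaging operator applied directly to $f$, for which the Minkowski/Hardy bound gives $|\mathcal R(f)|_{L^2}\le c|f|_{L^2}$ at once; you instead integrate by parts in the inner variable, which expresses $\partial_R\tilde g$ through $f'$ and therefore only yields $|\tilde g|_{L^2}\lesssim |f|_{H^1}$ unless you also notice that $\tilde g$ itself equals the paper's single-integral form (your phrase ``the bound on $\tilde g$ follows by integration'' glosses this: an $L^2$ bound on $\partial_R\tilde g$ on $(0,\infty)$ does not by itself control $|\tilde g|_{L^2}$). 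Similarly, $D_R$ does not literally commute with the averaging operator $T_\beta h(R)=R^{-1-\beta}\int_0^R s^\beta h$ — one has $D_R T_\beta=-(1+\beta)T_\beta + I$, so the ``shift of $\beta$'' is really a commutator identity whose pieces are $O(1)$ only because $\|T_\beta\|\sim\beta^{-1}$ offsets the factor $1+\beta$. Neither of these affects the conclusion, and at the level of rigor of the paper's own sketch your proposal is a faithful reproduction with the same key ideas.
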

\begin{proof}
Consider the case where $\Omega(R,\theta)= f(R) \sin(2 \theta)$, the case where $\Omega(R,\theta)= f(R) \cos(2 \theta)$ can be handled similarly. In this case $\Psi(R,\theta)$ will be of the form  $\Psi(R,\theta)=\Psi_2(R) \sin(2 \theta)$. Where  $\Psi_2(R)$  will satisfy the following ODE:
$$
\alpha^2 R^2 \dRR \Psi_2+(4 \alpha+ \alpha^2) R \dR \Psi_2=f(R)
$$
we can solve the ODE and obtain 

$$
\dR \Psi_2(R)=\frac{1}{\alpha^2} \frac{1}{R^{\frac{4}{\alpha}+1}} \int_0^R \frac{f(s)}{s^{1-\frac{4}{\alpha}}} \, ds
$$
Now using that $\Psi_2(R) \rightarrow 0$ as $R \rightarrow \infty$, we obtain 

$$
 \Psi_2(R)=-\frac{1}{\alpha^2} \int_{R}^{\infty} \frac{1}{\rho^{\frac{4}{\alpha}+1}} \int_0^{\rho} \frac{f(s)}{s^{1-\frac{4}{\alpha}}} \, ds
$$
 By integrating by parts, it follows that
 
 $$
 \Psi_2(R)=-\frac{1}{4 \alpha} \int_R^{\infty}\frac{f(s)}{s} \, ds- \frac{1}{4 \alpha} \frac{1}{R^{\frac{4}{\alpha}}} \int_0^R \frac{f(s)}{s^{1-\frac{4}{\alpha}}} \, ds :=-\frac{1}{4 \alpha}L(f)(R)+ \mathcal{R}(f)
$$
Using Hardy-type inequality one can show that: 
$$|\mathcal{R}(f)|_{L^2} \leq c |f|_{L^2} $$

where $c$ is independent of $\alpha$

\end{proof}

\section{ Embedding estimate in terms of $\Hn^k$ norm}\label{EmbeddEst}
In this section we consider some  embedding estimate in the $\Hn^k$ norm which will be used in section \ref{ReminderEstS}. These estimates will be used various times as we estimate the reminder term.  Recall that the $\Hn^k$ norm is defined as follows: 
$$
|f|_{\dot{\Hn}^m}=  \sum_{i=0}^m  |\dR^i \dth^{m-i} f|_{L^2}+ \sum_{i=1}^m |R^i \dR^i \dth^{m-i} f|_{L^2}
$$
$$
|f|_{\Hn^k}= \sum_{m=0}^{k} |f|_{\dot{\Hn}^m}
$$

\begin{lemma} \label{OmegaEmbed} Let $\Omega \in \Hn^N$, where $N \in \mathbb{N}$, then we have 

\begin{equation} \label{ROmegaLinfty}
|R^k \dR^k \dth^m \Omega|_{L^{\infty}}  \leq c_{k,m}  |\Omega|_{\Hn^{k+m+2}} \quad \text{for any} \quad  k+m+2 \leq N
\end{equation}

\end{lemma}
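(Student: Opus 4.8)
The goal is to establish the weighted Sobolev embedding $|R^k \dR^k \dth^m \Omega|_{L^\infty} \leq c_{k,m} |\Omega|_{\Hn^{k+m+2}}$ on the half-strip $(R,\theta)\in[0,\infty)\times[0,\frac\pi2]$, where the $\Hn$ norm carries the extra weighted terms $R^i\dR^i\dth^{m-i}$. The natural route is the classical one-dimensional Sobolev inequality $|g|_{L^\infty(\bbR)}^2 \leq 2|g|_{L^2}|g'|_{L^2}$ (or its $[0,\frac\pi2]$ analogue), applied successively in the two variables; the only twist is bookkeeping the weight $R^k\dR^k$ and checking that a derivative falling on the weight produces terms already controlled by the $\Hn$ norm.

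First I would set $g = R^k\dR^k\dth^m\Omega$ and apply the 1D Sobolev inequality in $\theta$ (on the compact interval $[0,\frac\pi2]$, where one picks up an extra $L^2$-in-$\theta$ term, harmless after integrating in $R$): $\sup_\theta |g(R,\cdot)|^2 \lesssim |g(R,\cdot)|_{L^2_\theta}^2 + |g(R,\cdot)|_{L^2_\theta}|\dth g(R,\cdot)|_{L^2_\theta}$. Then I would apply the 1D Sobolev inequality in $R$ to the function $R\mapsto |g(R,\cdot)|_{L^2_\theta}^2$ (or, more cleanly, to $R\mapsto |g|_{L^2_\theta}$ and $R\mapsto |\dth g|_{L^2_\theta}$ separately), which bounds $\sup_R$ of these quantities by $L^2_R$ norms of themselves and of their $\dR$-derivatives. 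After taking square roots and using Cauchy–Schwarz this reduces everything to controlling $|\dR^a \dth^b (R^k\dR^k\dth^m\Omega)|_{L^2}$ for $a,b\in\{0,1\}$.

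The key step is then to expand $\dR^a(R^k\dR^k\dth^{m+b}\Omega)$ by the Leibniz rule. When the $\dR$ derivatives all land on $\dR^k$ we get $R^k\dR^{k+a}\dth^{m+b}\Omega$, whose $L^2$ norm is exactly one of the weighted terms in $|\Omega|_{\dot\Hn^{k+a+m+b}}$, hence $\leq |\Omega|_{\Hn^{k+m+2}}$ since $a+b\leq 2$. When some $\dR$ lands on the weight $R^k$, we instead produce a term $R^{k-j}\dR^{k+a-j}\dth^{m+b}\Omega$ with $j\geq 1$; since $R^{k-j}\dR^{k-j}(\dR^{a+j-1}\cdots)$... — more precisely one rewrites $R^{k-j}\dR^{k+a-j} = R^{k-j}\dR^{k-j}\cdot\dR^{a}$ modulo lower-order $R$-weighted operators, so each such term is again of the form $R^i\dR^i(\text{derivatives of }\Omega)$ with $i\leq k$ and total derivative count $\leq k+m+2$, hence bounded by $|\Omega|_{\Hn^{k+m+2}}$. (Here it is convenient to use the identity $D_R = R\dR$ and that $D_R^i$ is a combination of $R^j\dR^j$ for $j\leq i$, as already noted after Proposition~\ref{EllipticEst}, to organize the weighted terms.) Collecting the finitely many terms and absorbing the combinatorial constants into $c_{k,m}$ finishes the argument.

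The main obstacle — really the only non-routine point — is making the weight bookkeeping in the last step precise: one must verify that every term generated by differentiating $R^k\dR^k\dth^m\Omega$ once more in $R$ and once more in $\theta$ can be matched against one of the two families $\dR^i\dth^j\Omega$ or $R^i\dR^i\dth^j\Omega$ appearing in the definition of $\Hn^{k+m+2}$, with $i+j\leq k+m+2$ and $i\leq k+1\leq k+m+2$. This is a finite check once one commits to expressing everything in terms of the operators $D_R^i$ and noting $D_R^i = \sum_{j\le i} c_{ij} R^j \dR^j$; I do not expect any genuine difficulty, only care with indices and with the endpoint behavior at $R=0$ (which is why the statement requires $k+m+2\leq N$, giving enough regularity for all intermediate quantities to be finite and for the Sobolev inequalities to apply).
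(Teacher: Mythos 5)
Your plan follows essentially the same route as the paper: a Sobolev embedding to reduce to $L^2$ control of two extra derivatives of $R^k\dR^k\dth^m\Omega$, then Leibniz and weight bookkeeping to match each resulting term against the $\Hn^{k+m+2}$ norm. The only cosmetic difference is that you iterate the one-dimensional Agmon inequality in $\theta$ and then $R$, whereas the paper invokes the two-dimensional embedding $H^2_{R,\theta}\hookrightarrow L^\infty$ directly; either gives the same derivative budget $a+b\leq 2$.

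There is, however, one imprecision in the final bookkeeping step that, as written, leaves the argument open. You claim that after Leibniz each term ``is again of the form $R^i\dR^i(\text{derivatives of }\Omega)$ with $i\leq k$ and total derivative count $\leq k+m+2$, hence bounded by $|\Omega|_{\Hn^{k+m+2}}$,'' and you propose organizing this via $D_R^i=\sum_{j\leq i}c_{ij}R^j\dR^j$. But a term such as $R^{k-j}\dR^{k-j}\bigl(\dR^a\dth^{m+b}\Omega\bigr)=R^{k-j}\dR^{k+a-j}\dth^{m+b}\Omega$ with $a\geq 1$ is \emph{not} one of the summands in $\Hn^{k+m+2}$: in that norm the $R$-weight power always equals the number of $\dR$'s, whereas here the weight power $k-j$ falls short of the $\dR$-count $k+a-j$ by exactly $a$. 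The same issue occurs already in the unweighted case $j=0$, where $\dR^2$ hitting the inner factor gives $R^k\dR^{k+2}\dth^m\Omega$. The $D_R$ reorganization does not by itself resolve this mismatch. What closes the argument — and what the paper's proof does explicitly in this representative case — is the elementary pointwise inequality $R^p\leq 1+R^q$ for $p\leq q$, which yields
\[
|R^p\dR^q\dth^s\Omega|_{L^2}\ \leq\ |\dR^q\dth^s\Omega|_{L^2}+|R^q\dR^q\dth^s\Omega|_{L^2},
\]
and both right-hand terms are literally summands of $|\Omega|_{\Hn^{q+s}}$ with $q+s\leq k+m+2$. With this substitution for the $D_R$ step your proof closes as intended; the rest of the outline is sound.
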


\begin{proof}

Using Sobolev  embedding, we have  
 \begin{equation*}
\begin{split}
|R^k \dR^k \dth^m \Omega|_{L^{\infty}} &\leq c_{k,m}  |R^k \dR^k \dth^m \Omega|_{H^{2}_{R,\theta}}  \\         
 \end{split}
\end{equation*}
where $H^{2}_{R,\theta} $ is the standard $H^2$ norm in $R$ and $\theta$. 
When considering  the second derivative terms of $R^k \dR^k \dth^m \Omega$, for the angular derivatives term, we have $|R^k \dR^k \dth^{m+2} \Omega|_{L^2}  \leq  |\Omega|_{\Hn^{k+m+2}}$. Now for the radial derivatives, we have three cases. Considering  the case when the two radial derivatives land on $\dR^k \dth^m \Omega$, we have   $$|R^{k} \dR^{k+2} \dth^{m} \Omega|_{L^2}  \leq  |R^{k+2} \dR^{k+2} \dth^{m} \Omega|_{L^2}+|  \dR^{k+2} \dth^{m} \Omega| \leq |\Omega|_{\Hn^{k+m+2}}$$
where the last inequality follows from the definition of the $\Hn^{N}$ norm. The other two cases follow in a similar way.

\end{proof}

We will also need some embedding estimates for the stream function $\Psi$ in terms of $\Omega$.

\begin{lemma}\label{PsiEmbed}  Let $\Omega \in \Hn^N$, where $N \in \mathbb{N}$,  satisfying the same conditions as in Proposition \ref{EllipticEst}, then for the solution $\Psi$ of $$4 \Psi +  \dthth \Psi + \alpha^2 R^2  \dRR \Psi  +(4 \alpha +\alpha^2) R \dR \Psi=\Omega(R,\theta),$$ we have 
 \begin{equation} \label{PsiInfty}
 |\dR^k \dth^m \Psi|_{L^{\infty}}  \leq  c_{k,m}   |\Omega|_{\Hn^{k+m+1}} 
\end{equation}
for $k,m \in \mathbb{N}$  with $k+m+1 \leq N$  
\end{lemma}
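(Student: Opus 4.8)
The plan is to reduce the $L^\infty$ bound to the $\mathcal H^k$ bounds of Proposition~\ref{EllipticEst} via a one-dimensional Sobolev embedding, exactly in the spirit of Lemma~\ref{OmegaEmbed}. First I would apply the Sobolev embedding $H^2_{R,\theta}\hookrightarrow L^\infty$ on the two-dimensional domain $[0,\infty)\times[0,\tfrac\pi2]$ to get
\[
|\dR^k\dth^m\Psi|_{L^\infty}\leq c_{k,m}\,|\dR^k\dth^m\Psi|_{H^2_{R,\theta}},
\]
so it suffices to control $|\dR^{k+i}\dth^{m+j}\Psi|_{L^2}$ for $i+j\leq2$. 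The key structural point is that the elliptic equation only gives us good control on $\dthth\Psi$, on $\alpha R\dRth\Psi$, and on $\alpha^2 R^2\dRR\Psi$ (by \eqref{EllipticEstMain}), not on bare radial derivatives of $\Psi$ or on $\Psi$ itself. So the real content is to trade an unweighted radial derivative for a weighted one: since $f_0$ (hence $\Omega$, hence $\Psi$) is compactly supported in $R$, say in $\{R\leq M\}$, one has the elementary bound $|\dR^{k} g|_{L^2}\leq C_M|R^k\dR^k g|_{L^2}$ plus lower-order terms, just as in the proof of Lemma~\ref{OmegaEmbed}. Combined with \eqref{EllipticEstMain}/\eqref{EllipticEstW} this converts every radial derivative of $\Psi$ appearing into a term bounded by $|\Omega|_{\mathcal H^{k+m+1}}$.

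The bookkeeping goes as follows. To bound $\dR^k\dth^m\Psi$ in $H^2_{R,\theta}$ I need at most $k+2$ radial derivatives and $m+2$ angular derivatives of $\Psi$, but always with total order $\leq k+m+2$ and with at least two derivatives "spendable" as angular or as one radial plus the $R$-weight. Concretely, the worst terms are $\dR^{k+2}\dth^m\Psi$, $\dR^{k+1}\dth^{m+1}\Psi$, and $\dR^{k}\dth^{m+2}\Psi$. For the last, \eqref{EllipticEstMain} applied with index $k+m$ controls $|\dR^k\dth^{m+2}\Psi|_{L^2}=|\dthth(\dR^k\dth^m\Psi)|_{L^2}\lesssim |\dR^k\dth^m\Omega|_{L^2}\leq|\Omega|_{\mathcal H^{k+m}}$. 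For the mixed and pure radial terms, I first insert the $R$-weight at the cost of the compact support constant (picking up lower-order unweighted radial derivatives, handled by downward induction on $k$), then apply \eqref{EllipticEstW}: $\alpha|R\dRth D_R^{k-1}\Psi|$ and $\alpha^2|R^2\dRR D_R^k\Psi|$ are each $\lesssim |D_R^{k}\Omega|_{L^2}$ or $|D_R^{k-1}\Omega|_{L^2}$ up to angular derivatives, which are all $\leq|\Omega|_{\mathcal H^{k+m+1}}$. The powers of $\alpha$ that appear in \eqref{EllipticEstW} are harmless here because they only make the bound smaller (the constant $c_{k,m}$ is allowed to be $\alpha$-independent, hence we may simply drop the $\alpha$'s). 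The one genuinely new index compared to Lemma~\ref{OmegaEmbed} is that $\Psi$ "costs" one fewer derivative than $\Omega$ — this is precisely the gain from the elliptic equation, which reads off two derivatives of $\Psi$ in terms of zero derivatives of $\Omega$, so $k+m+1$ (rather than $k+m+2$) derivatives of $\Omega$ suffice.

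The main obstacle is not any hard estimate but the careful organization of the induction: one must be sure that every occurrence of a radial derivative $\dR^j\Psi$ with $j\geq 1$ is ultimately expressed through the weighted quantities $D_R^\ell\Psi$ with $\ell\geq1$ controlled by \eqref{EllipticEstW}, and that the unweighted residual terms produced when inserting the $R$-weight are strictly lower order so the induction on $k$ closes. A secondary point to be careful about is behavior near $R=0$: the weight $R$ degenerates there, but since we only ever insert extra positive powers of $R$ (never divide by $R$), and since all norms are taken with the flat measure $dR\,d\theta$, no singularity is introduced, and the compact-support reduction $|\dR^j g|_{L^2}\lesssim_M |R^j\dR^j g|_{L^2}+\text{l.o.t.}$ is legitimate. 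I expect the write-up to mirror the proof of Lemma~\ref{OmegaEmbed} almost verbatim, invoking Proposition~\ref{EllipticEst} in place of the definition of the $\mathcal H^N$ norm at the final step.
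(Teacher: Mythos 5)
There is a genuine gap in your argument, and it stems from two related misreadings of Proposition~\ref{EllipticEst}.

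First, you take as your ``key structural point'' that the elliptic estimates only control $\dthth\Psi$, $\alpha R\dRth\Psi$, and $\alpha^2 R^2\dRR\Psi$, so that bare radial derivatives of $\Psi$ must be recovered by trading unweighted derivatives for weighted ones via compact support: $|\dR^k g|_{L^2}\leq C_M|R^k\dR^k g|_{L^2}+\mathrm{l.o.t.}$ This inequality is false, and the failure is exactly near $R=0$, where the weight degenerates (consider $g(R)=\phi(R/\eps)$: $|\dR g|_{L^2}\sim\eps^{-1/2}$ while $|R\dR g|_{L^2}$ and $|g|_{L^2}$ are both $\sim\eps^{1/2}$). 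Compact support in $R$ controls large $R$ but gives you nothing at the origin, which is the only place the weight hurts you. Moreover, the lemma as stated assumes only $\Omega\in\Hn^N$ with the orthogonality conditions of Proposition~\ref{EllipticEst}; no compact support is assumed, and even if $\Omega$ were compactly supported, $\Psi$ solves a non-local elliptic problem and need not be. Second, you assert that the powers of $\alpha$ in \eqref{EllipticEstW} ``only make the bound smaller'' and ``we may simply drop the $\alpha$'s.'' This reverses the inequality: $\alpha^2|R^2\dRR D_R^k\Psi|_{L^2}\leq C_k|D_R^k\Omega|_{L^2}$ is a bound of the form $|R^2\dRR D_R^k\Psi|_{L^2}\leq \alpha^{-2}C_k|D_R^k\Omega|_{L^2}$, which blows up as $\alpha\to 0$; you cannot discard the $\alpha$'s. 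Since the whole point of the lemma is an $\alpha$-\emph{independent} constant, any route through the weighted radial terms of Proposition~\ref{EllipticEst} is a dead end.

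The paper's proof avoids both pitfalls because it never uses the weighted radial derivative bounds of Proposition~\ref{EllipticEst} at all. The only input from the elliptic estimate is the $\alpha$-uniform bound $|\dthth\Psi|_{H^k}\leq C_k|\Omega|_{H^k}$, which already controls $|\dR^i\dth^j(\dthth\Psi)|_{L^2}$ for $i+j\leq k$ \emph{with no weight and no $\alpha$ loss}. The missing ingredient you did not use is the spectral gap coming from the orthogonality conditions: since $\Psi$ (and hence any $\dR^i\dth^n\Psi$) has no Fourier modes $e^{in\theta}$ with $|n|\leq 2$, one has $|g|_{L^2}\leq\tfrac19|\dthth g|_{L^2}$ and $|\dth g|_{L^2}\leq\tfrac13|\dthth g|_{L^2}$. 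This converts control of $\dthth\Psi$ in $H^k$ directly into control of all unweighted mixed derivatives of $\Psi$ up to order $k+2$, which is then combined with the two-dimensional Sobolev embedding $H^2_{R,\theta}\hookrightarrow L^\infty$ exactly as you proposed at the start. In short: the Sobolev-embedding reduction is right, but the correct mechanism for bounding $|\dR^i\dth^n\Psi|_{L^2}$ in terms of $|\Omega|_{\Hn^{i+n-1}}$ is the spectral gap plus the \emph{unweighted} $H^k$ elliptic estimate, not compact support plus the weighted one.
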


\begin{proof}

As in Lemma \ref{OmegaEmbed}, applying the Sobolev  embedding, we have 
 \begin{equation*}
\begin{split}
 | \dR^k \dth^m \Psi|_{L^{\infty}} &\leq    \,  c_{k,m} | \dR^k \dth^m \Psi|_{H^{2}_{R,\theta}} \\         
 \end{split}
\end{equation*}
From the elliptic estimates in  Proposition \ref{EllipticEst}, for any $i,n \in \mathbb{N}$, we have   
 \begin{equation} \label{PsiL2Omega}
\begin{split}
 | \dR^i \dth^{n} \Psi|_{L^2}  \leq  c_{i,n} \, |\Omega|_{\Hn^{i+n-1}}
 \end{split}
\end{equation}

Thus, to bound $ | \dR^k \dth^m \Psi|_{H^{2}_{R,\theta}}$, we take $\Omega$ to be in $\Hn^{k+m+1}$. Hence, we have 

 \begin{equation} \label{PsiInfty}
 |\dR^k \dth^m \Psi|_{L^{\infty}}  \leq  c_{k,m}   |\Omega|_{\Hn^{k+m+1}} 
\end{equation}

\end{proof}

\begin{lemma}\label{RPsiEmbed}  Let $\Omega \in \Hn^N$, where $N \in \mathbb{N}$,  satisfying the same conditions as in Proposition \ref{EllipticEst}, then for the solution $\Psi$ of $$4 \Psi +  \dthth \Psi + \alpha^2 R^2  \dRR \Psi  +(4 \alpha +\alpha^2) R \dR \Psi=\Omega(R,\theta),$$ we have 
 \begin{equation} \label{RPsiInfty}
 |R^k \dR^k \dth^m \Psi|_{L^{\infty}}  \leq  c_{k,m}   |\Omega|_{\Hn^{k+m+1}}
\end{equation}
for $k,m \in \mathbb{N}$  with $k+m+1 \leq N$  
\end{lemma}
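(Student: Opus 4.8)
The plan is to combine a one-dimensional Sobolev embedding in the $(R,\theta)$ variables with the weighted elliptic estimate \eqref{EllipticEstW} from Proposition \ref{EllipticEst}, exactly as in the proof of Lemma \ref{PsiEmbed}, but now carrying the weight $R^k$ through the argument. First I would note that the operator $D_R = R\dR$ commutes with the elliptic equation (this was observed at the end of the proof of Proposition \ref{EllipticEst}), so $D_R^k \Psi$ solves the same elliptic problem with right-hand side $D_R^k\Omega$, and $D_R^k\Omega$ still satisfies the orthogonality hypotheses for $n=0,1,2$ since $D_R$ acts only in $R$. Hence \eqref{EllipticEstW} gives
$$
|\dthth D_R^k\Psi|_{L^2} + \alpha|R\dRth D_R^k\Psi|_{L^2} + \alpha^2|R^2\dRR D_R^k\Psi|_{L^2} \leq C_k |D_R^k\Omega|_{L^2} \leq C_k|\Omega|_{\Hn^k}.
$$

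Next I would write $R^k\dR^k\dth^m\Psi$ in terms of the $D_R$-derivatives: $R^k\dR^k = D_R^k + (\text{lower-order }D_R\text{-monomials with constant coefficients})$, so it suffices to bound $|D_R^k\dth^m\Psi|_{L^\infty}$ for all relevant $k,m$. Applying the two-dimensional Sobolev embedding $H^2_{R,\theta}\hookrightarrow L^\infty$ (as in Lemmas \ref{OmegaEmbed} and \ref{PsiEmbed}),
$$
|D_R^k\dth^m\Psi|_{L^\infty} \leq c_{k,m}\,|D_R^k\dth^m\Psi|_{H^2_{R,\theta}},
$$
and the $H^2_{R,\theta}$ norm expands into terms $|\dR^i\dth^{n}D_R^k\dth^m\Psi|_{L^2}$ with $i+n\leq 2$. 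For the pure-angular terms ($i=0$) and the mixed/radial terms one uses that $D_R$ commutes, pulls it through, and invokes \eqref{EllipticEstW} together with the $\Hn^k$-version \eqref{EllipticEstMain}; since $\Psi$ "gains" one angular derivative relative to $\Omega$ (the $\dthth\Psi$ term is controlled by $\Omega$ with no derivative), the worst term needs $\Omega\in\Hn^{k+m+1}$. Collecting the cases yields $|R^k\dR^k\dth^m\Psi|_{L^\infty}\leq c_{k,m}|\Omega|_{\Hn^{k+m+1}}$ for $k+m+1\leq N$.

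The main obstacle — really the only point requiring care — is the bookkeeping when the two Sobolev derivatives are radial and land past the weight, i.e. controlling a term like $|\dR^2 D_R^k\dth^m\Psi|_{L^2}$: one must rewrite $\dR^2 D_R^k$ as a combination of $D_R^{k'}$ with $k'\le k+2$ against the right power of $R^{-1}$, and then check that the resulting weighted $L^2$ norm of $\dthth$-, $\alpha R\dRth$-, or $\alpha^2 R^2\dRR$-type quantities is indeed dominated by $|\Omega|_{\Hn^{k+m+1}}$ via \eqref{EllipticEstW}. This is the same mechanism already used (for $\Omega$ in place of $\Psi$) in Lemma \ref{OmegaEmbed}'s case analysis, so I would handle it by the identical three-case split — the two radial derivatives on $D_R^k\dth^m\Psi$ directly, one on the weight and one on $\Psi$, or both on the weight — each reduced to \eqref{EllipticEstW} or \eqref{EllipticEstMain}. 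Everything else is routine.
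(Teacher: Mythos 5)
Your proof is correct and follows essentially the same route as the paper: Sobolev embedding $H^2_{R,\theta}\hookrightarrow L^\infty$, the elliptic consequences of Proposition \ref{EllipticEst} recorded in \eqref{PsiL2} and \eqref{RPsiL2}, and the same case split on where the two Sobolev derivatives land as in Lemma \ref{OmegaEmbed}. The only cosmetic difference is that you route the bookkeeping through $D_R$-monomials and commutation, whereas the paper works directly with $R^k\dR^k\dth^m\Psi$ and the pointwise inequality $R^a\leq R^b+1$ for $0\leq a\leq b$ (which also dispenses with the negative powers of $R$ you flag: the offending Leibniz coefficients vanish, so no genuine $R^{-1}$ ever appears).
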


\begin{proof}

As in Lemma \ref{OmegaEmbed}, applying the Sobolev  embedding, we have 
 \begin{equation*}
\begin{split}
 |R^k \dR^k \dth^m \Psi|_{L^{\infty}} &\leq   \,  c_{k,m} |R^k \dR^k \dth^m \Psi|_{H^{2}_{R,\theta}} \\         
 \end{split}
\end{equation*}
From the elliptic estimates in  Proposition \ref{EllipticEst}, for any $i,n \in \mathbb{N}$, we have   
 \begin{equation} \label{PsiL2}
\begin{split}
 | \dR^i \dth^{n} \Psi|_{L^2} \leq  c_{i,n} \, |\dR^{i} \dth^{n-1} \Omega|_{L^2} \leq  c_{i,n} \, |\Omega|_{\Hn^{i+n-1}}
 \end{split}
\end{equation}
and
 \begin{equation} \label{RPsiL2}
\begin{split}
 |R^i \dR^i \dth^{n} \Psi|_{L^2}  \leq c_{i,n}  \, |\Omega|_{\Hn^{i+n-1}}
 \end{split}
\end{equation}

Thus, if we look at the second derivative terms of $R^k \dR^k \dth^m \Psi$, we can use the above inequalities to obtain the desired estimate. For the angular derivative term, we have $ |R^k \dR^k \dth^{m+2} \Psi|_{L^2}  \leq c_{k,m} \,  |\Omega|_{\Hn^{k+m+1}} $. When considering the radial derivative terms, we have three terms. For $R^{k} \dR^{k+2} \dth^{m} \Psi$ term, applying \eqref{PsiL2} and \eqref{RPsiL2}, we have 
$$  |R^{k} \dR^{k+2} \dth^{m} \Psi|_{L^2}  \leq   |R^{k+2} \dR^{k+2} \dth^{m} \Psi|_{L^2}+ |  \dR^{k+2} \dth^{m} \Psi| \leq c_{k,m}\, |\Omega|_{\Hn^{k+m+1}}
$$
The other terms can be handled in similar way.  Hence, we have our desired result.

\end{proof}

\section{ Reminder estimate } \label{ReminderEstS}

In this section, we obtain an error estimate on the remaining terms in the Euler with Riesz forcing. Recall that $\Omega$ satisfies the following evolution equation: 
\begin{equation*}
\begin{split}
\partial_t{\Omega} +  \Big( -\alpha R \dth \Psi  \Big)\dR \Omega + \Big( 2 \Psi  + \alpha R \dR \Psi \Big)  \dth \Omega &=  \big(2 \alpha R \sin(\theta) \cos(\theta) +\alpha^2 R \sin(\theta) \cos(\theta) \big) \dR \Psi \\  &+\big( 1-2 \sin^2(\theta) \big) \dth \Psi +\big( \alpha R \cos^2(\theta) + \alpha R \sin^2(\theta)\big) \dRth\Psi \\ &+  \big(\alpha^2R^2 \sin(\theta) \cos(\theta) \big)\dRR \Psi - \big( \sin(\theta) \cos(\theta)  \big)\dthth\Psi \\
\end{split}
\end{equation*}

and and the elliptic equation is the following:
$$
4 \Psi + \alpha^2 R^2  \dRR \Psi + \dthth \Psi +(4 \alpha +\alpha^2) R \dR \Psi=\Omega(R,\theta)
$$

From section \ref{LOM}, the leading order model for the Euler with Riesz forcing equation satisfies the following:

\begin{equation}
\begin{split}
\partial_t{\Omega_2} +  \big( 2  \Psi_2   \big)  \dth \Omega_2 &=   \big( -1+2 \sin^2(\theta) \big) \dth \Psi_2 +  \big( \sin(\theta) \cos(\theta)  \big)\dthth\Psi_2 \\
\end{split}
\end{equation}
Where 
$$
\Psi_2(R,\theta)=
\frac{1}{4 \alpha}  L_s(\Omega) \sin(2 \theta)+ \frac{1}{4 \alpha}  L_c(\Omega) \cos(2 \theta)
$$

Now set  $ \Omega_r =\Omega- \Omega_2$ to be the reminder term for the vorticity, and similarly  set $ \Psi_r =\Psi- \Psi_2$ to be the reminder term for the stream function. Thus, we have that reminder, $\Omega_r$, satisfies  the following evolution equation:   
\begin{equation}\label{ReminderEq}
\begin{split}
\partial_t{\Omega_r} &+  \Big( -\alpha R (\dth \Psi_2  +\dth \Psi_r ) \Big)(\dR \Omega_2 + \dR \Omega_r)+ \Big( 2  \Psi_2   \dth \Omega_r + 2  \Psi_r   \dth \Omega_2 + 2   \Psi_r   \dth \Omega_r  \Big) 
 \\& +    \Big(\alpha R  (\dR \Psi_2+ \dR\Psi_r) \Big)(  \dth \Omega_2 + \dth \Omega_r)= \big(2 \alpha R \sin(\theta) \cos(\theta) +\alpha^2 R \sin(\theta) \cos(\theta) \big) \big(\dR \Psi_2 + \dR \Psi_r \big) 
  \\  &+ \big( 1-2 \sin^2(\theta) \big) \dth \Psi_r + \alpha\big( R \cos^2(\theta) -  R \sin^2(\theta)\big)( \dRth\Psi_2+\dRth\Psi_r)
   \\ &+   \alpha^2\big(R^2 \sin(\theta) \cos(\theta) \big)(\dRR \Psi_2+ \dRR \Psi_r) -  \big( \sin(\theta) \cos(\theta)  \big)\dthth\Psi_r \\
\end{split}
\end{equation}
The goal of this section is to show that $\Omega_r$ remains small. Namely, using energy methods,  for some time $T$, we show that $$\sup_{t \leq T}|\Omega_r(t)|_{L^{\infty}} \leq C \alpha^{\frac{1}{2}}$$
for some constant $C$ independent of $\alpha$. 
We define the following term to shorten the notation:
$$
I_1= -\alpha R(\dth \Psi_2  +\dth \Psi_r ) (    \dR \Omega_2 +      \dR \Omega_r) \, , \, I_2=   ( 2  \Psi_2   \dth \Omega_r + 2  \Psi_r   \dth \Omega_2 + 2   \Psi_r   \dth \Omega_r  ) \, , \,  I_3=   \alpha R (   \dR \Psi_2+   \dR\Psi_r) (  \dth \Omega_2 + \dth \Omega_r) \   
$$
$$
I_4=   2 \alpha  (1-\alpha) R \sin(\theta) \cos(\theta)   (\dR \Psi_2 + \dR \Psi_r  ) \, , \, I_5=  ( 1-2 \sin^2(\theta) ) \dth \Psi_r\, , \,  I_6=   \alpha( R \cos^2(\theta) -  R \sin^2(\theta))( \dRth\Psi_2+\dRth\Psi_r)\ $$
$$
I_7=   \alpha^2(R^2 \sin(\theta) \cos(\theta)) (\dRR \Psi_2+ \dRR \Psi_r)\, , \,  I_8=  - ( \sin(\theta) \cos(\theta) ) \dthth\Psi_r\,    
$$

and now we have the error estimate proposition.
\begin{proposition}\label{ReminderEst}
Let $\Omega_r=\Omega- \Omega_2$ satisfying \eqref{ReminderEq}  with $\Omega_r|_{t=0}=0 $ then $$\sup_{0\leq t < T}|\Omega_r(t)|_{L^{\infty}} \leq c_N \alpha^{\frac{1}{2}}$$ where $T=c  \, \alpha |\log \alpha|$ and $c$  is a small constant independent of $\alpha$
 \end{proposition}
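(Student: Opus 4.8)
The plan is to run an $L^2$-based energy estimate for $\Omega_r$ at high Sobolev regularity $\Hn^N$ (with $N$ chosen large enough that all the embedding lemmas of Section~\ref{EmbeddEst} apply), and then close it via Grönwall on the time interval $[0,T)$ with $T = c\,\alpha|\log\alpha|$. The point of choosing exactly this $T$ is that the leading-order dynamics forces factors of the form $e^{c t/\alpha}$ (Lemma~\ref{LOmega2Est}), and $e^{cT/\alpha} = \alpha^{-c'}$ is only a polynomial loss in $\alpha$; meanwhile the error terms carry genuine positive powers of $\alpha$ coming from the $\alpha$-prefactors in $I_1,I_3,I_4,I_6,I_7$ and from the fact that $\Omega_r$ starts at zero. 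So the final bound will be of the shape $|\Omega_r(t)|_{\Hn^N} \lesssim \alpha \cdot e^{ct/\alpha} \lesssim \alpha^{1/2}$ for $t\le T$, after which Sobolev embedding (Lemma~\ref{OmegaEmbed}) upgrades this to the claimed $L^\infty$ bound $\sup_{t<T}|\Omega_r(t)|_{L^\infty}\le c_N\alpha^{1/2}$.

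Concretely, I would first differentiate \eqref{ReminderEq} up to order $N$ in the $\Hn^N$ sense, pair with the corresponding derivative of $\Omega_r$, and organize the right-hand side into three categories. (i) \emph{Transport commutators}: the genuinely quadratic-in-$\Omega_r$ terms (the $2\Psi_2\dth\Omega_r$ piece and the $\Psi_r$-times-$\Omega_r$ pieces) are handled by the standard transport energy estimate, using that $\Psi_2$ is controlled by $|\Psi_2|_{\Wn^{N+1,\infty}}\le c_N/\alpha$ from Lemma~\ref{LPsi2Est}, and that $\Psi_r$ is recovered from $\Omega_r$ by the elliptic estimate (Proposition~\ref{EllipticEst}) plus the embedding Lemmas~\ref{PsiEmbed}–\ref{RPsiEmbed}, so $|\Psi_r|_{\Wn^{\cdot,\infty}}\lesssim |\Omega_r|_{\Hn^N}$; these contribute a term $\frac{c_N}{\alpha}|\Omega_r|_{\Hn^N}^2 + \frac{c_N}{\alpha}|\Omega_r|_{\Hn^N}$ to $\frac{d}{dt}|\Omega_r|_{\Hn^N}$. (ii) \emph{Forcing by the leading-order solution}: the terms that are linear in $\Omega_r$ with coefficients built from $\Psi_2,\Omega_2$ — namely parts of $I_1,I_3$ involving $\dR\Omega_2,\dth\Omega_2$ — are bounded using Lemma~\ref{LOmega2Est} ($|\Omega_2|_{\Hn^N}\le c_N e^{c_N t/\alpha}$) and Lemma~\ref{LPsi2Est}, and crucially they come with an explicit factor of $\alpha R$, so the $\alpha$ cancels the worst $1/\alpha$ and we get a clean bound. (iii) \emph{Pure source terms} (no $\Omega_r$): $I_4,I_6,I_7$ and the $\Psi_2$-parts of $I_1,I_3$, together with the mismatch between the true equation and the leading-order model — these are the terms responsible for generating $\Omega_r$ out of nothing, and each carries at least one power of $\alpha$ (from $I_4\sim\alpha$, $I_6\sim\alpha$, $I_7\sim\alpha^2$) times something controlled by $c_N e^{c_N t/\alpha}$ via Lemmas~\ref{LPsi2Est}–\ref{LOmega2Est} and the elliptic/embedding estimates. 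The key structural input is that the $\alpha R\,\dR$ and $\alpha R\,\dRth$, $\alpha^2 R^2\dRR$ combinations are \emph{exactly} the ones the elliptic estimate \eqref{EllipticEstMain} controls with $\alpha$-independent constants, so multiplying by the explicit $\alpha$ (resp. $\alpha^2$) prefactors in $I_1,I_3,I_4,I_6,I_7$ yields net positive powers of $\alpha$ rather than losses.

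Assembling these, I expect a differential inequality of the form
\[
\frac{d}{dt}|\Omega_r|_{\Hn^N} \;\le\; \frac{c_N}{\alpha}\,|\Omega_r|_{\Hn^N} \;+\; \frac{c_N}{\alpha}\,|\Omega_r|_{\Hn^N}^2 \;+\; c_N\,e^{c_N t/\alpha},
\]
with $|\Omega_r|_{\Hn^N}|_{t=0}=0$. A bootstrap/continuity argument on the assumption $|\Omega_r|_{\Hn^N}\le \alpha^{1/2}$ kills the quadratic term (it is then dominated by the linear one on $[0,T]$ since $\alpha^{-1}\alpha^{1/2}\cdot\alpha^{1/2}=\alpha^{-0}$... more precisely the quadratic contribution stays lower order as long as $|\Omega_r|_{\Hn^N}\ll 1$), and solving the linear ODE gives $|\Omega_r(t)|_{\Hn^N}\le c_N\alpha\,(e^{c_N t/\alpha}-1)/c_N \cdot(\text{const})$, which at $t=T=c\alpha|\log\alpha|$ is $\le c_N\,\alpha\cdot\alpha^{-c\cdot c_N}$. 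Choosing the absolute constant $c$ in $T$ small enough that $c\cdot c_N < 1/2$ makes this $\le c_N\alpha^{1/2}$, closing the bootstrap; then Lemma~\ref{OmegaEmbed} gives the $L^\infty$ statement.

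The main obstacle I anticipate is bookkeeping the interaction terms in category (ii)–(iii) at top order: one must verify that every place where a bare $1/\alpha$ would appear (from $\Psi_2\sim 1/\alpha$ or from the reciprocal in $L_s$) is accompanied either by an explicit $\alpha R$-type weight matching the elliptic estimate or by a derivative falling on $\Omega_r$ itself (hence absorbable into the linear Grönwall term), and that no term is simultaneously quadratic in $\Omega_r$ \emph{and} carries $1/\alpha$ without an $\alpha$-weight — such a term would break the bootstrap. Checking that the worst terms (the $\dthth\Psi_r$ and $\dth\Psi_r$ contributions $I_5,I_8$, which have \emph{no} $\alpha$ prefactor) are nonetheless fine requires using that $\dthth\Psi_r$ and $\dth\Psi_r$ are controlled directly by $|\Omega_r|_{\Hn^N}$ via \eqref{EllipticEstMain} with an $\alpha$-independent constant, so $I_5,I_8$ feed only into the \emph{linear-in-$\Omega_r$ with $O(1)$ coefficient} part of the inequality — harmless. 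Once this accounting is done carefully the Grönwall step is routine.
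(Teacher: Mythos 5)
Your proposal is correct and follows essentially the same route as the paper's own proof: an $\Hn^N$ energy estimate for $\Omega_r$ using the decomposition of \eqref{ReminderEq} into the $I_i$ pieces, the bounds on $\Psi_2$ and $\Omega_2$ from Lemmas~\ref{LPsi2Est} and \ref{LOmega2Est}, the elliptic and embedding estimates (Proposition~\ref{EllipticEst}, Lemmas~\ref{PsiEmbed}--\ref{RPsiEmbed}) to control $\Psi_r$ by $\Omega_r$ with $\alpha$-independent constants, a Gr\"onwall/bootstrap step producing $|\Omega_r|_{\Hn^N}\lesssim \alpha^{1/2}$ on $[0,c\alpha|\log\alpha|]$, and then Sobolev embedding (Lemma~\ref{OmegaEmbed}) to pass to $L^\infty$. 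The only deviation is cosmetic: your differential inequality carries the slightly lossy $\frac{c_N}{\alpha}|\Omega_r|^2$ nonlinear coefficient where the paper gets $c_N|\Omega_r|^2$ (with the $\frac{c_N}{\alpha}$ appearing only on the linear term), but since the bootstrap assumption $|\Omega_r|\lesssim \alpha^{1/2}\ll 1$ already makes the quadratic term absorbable into the $\frac{c_N}{\alpha}|\Omega_r|$ linear term, the argument closes identically.
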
 
 \begin{proof}
 We will use $\partial^{N}$ to refer to any mixed derivatives in $R$ and $\theta$ of  order $N$ (not excluding pure $R$ and $\theta$ derivatives). From the definition of $\Hn^{N}$ norm, to obtain $\Hn^{N}$ estimate we will take the following inner product with each $I_{i}$ term: 
$$\quad \big\langle \partial^{N} I_i, \partial^{N} \Omega_r \big\rangle \quad \text{and}  \quad \big\langle     R^{k} \dR^{k} \dth^{N-k}I_i ,  R^{k} \dR^{k} \dth^{N-k}  \Omega_r \big\rangle$$
for $0 \leq k \leq N$ and $1 \leq i \leq 8$.

\textbf{Estimate on $I_1$ and $I_{3}$}

Here we will estimate $I_1$ and $I_{3}$. The estimate of $I_{3}$ is very similar to $I_{1}$, and so we will just show how to obtain the estimate on $I_1$.

\textbf{Estimate on $I_1$}

We can write $I_1$ as 
\begin{equation*}
\begin{split}
I_1=   -\alpha R(\dth \Psi_2  +\dth \Psi_r ) (    \dR \Omega_2 +      \dR \Omega_r) \,     &=
-   \alpha (\dth \Psi_2   ) R(    \dR \Omega_2  ) \,             -   \alpha  (\dth \Psi_2    )  R(       \dR \Omega_r) \,     
 \\&  -   \alpha  (\dth \Psi_r ) R (    \dR \Omega_2  ) \,          -  \alpha  (\dth \Psi_r ) R (       \dR \Omega_r) \,  \\ 
 &=I_{1,1}+ I_{1,2} +I_{1,3}+ I_{1,4} \\
 \end{split}
\end{equation*}
and we will estimate each term separately.  

\begin{itemize}

\item  $I_{1,1}= -\alpha \dth \Psi_2   \,  R  \dR \Omega_2   $

Here we have 
$$ \big\langle \partial^{N}(\alpha \dth \Psi_2   \,  R  \dR \Omega_2 ), \partial^{N} \Omega_r \big\rangle =\sum_{i=0}^N c_{i,N}  \int \partial^{i} ( \alpha \dth \Psi_2 )    \partial^{N-i} ( R  \dR \Omega_2) \,   \partial^{N}\Omega_r    $$
Now from  Lemma \ref{LPsi2Est} and Lemma \ref{LOmega2Est}, we know that $$ | \Psi_2|_{\Wn^{k+1,\infty}} \leq \frac{c_{k}}{\alpha}    \quad  \text{and}   \quad    |\Omega_2|_{\Hn^{k}}  \leq   | \Omega_2(0)|_{\Hn^k} e^{ \frac{c_k}{ \alpha}t}$$

Thus, we have 
\begin{equation*}
\begin{split}
\sum_{i=0}^N\int \alpha \partial^{i} (  \dth \Psi_2 )    \partial^{N-i} ( R  \dR \Omega_2) \,   \partial^{N}\Omega_r &\leq  c_N  \sum_{i=0}^N  \alpha |\partial^{i}   \dth \Psi_2 |_{L^{\infty}} \,       |\partial^{N-i} ( R  \dR \Omega_2) |_{L^2} \,  |\partial^{N}\Omega_r |_{L^2}  \\
& \leq c_N    \alpha   | \Psi_2|_{\Wn^{N+1,\infty}}    \,      |\Omega_2|_{\Hn^{N+1}} \,       |\Omega_r|_{\Hn^N}  \leq  \alpha \,  \frac{c_{N}}{\alpha} e^{\frac{c_N}{\alpha} t}  |\Omega_r|_{\Hn^N} \\ & \leq c_N  e^{\frac{c_N}{\alpha} t}  |\Omega_r|_{\Hn^N}
 \end{split}
\end{equation*}

and similarly we have

 \begin{equation*}
\begin{split}
 \big\langle  \dR^{k} \dth^{N-k}(\alpha \dth \Psi_2   \, &  R  \dR \Omega_2 ),  R^{2k} \dR^{k} \dth^{N-k}  \Omega_r  \big\rangle =\\
 & c_{i,m,N}  \int \sum_{i+m=0}^{N}  \dR^i \dth^{m} ( \, \alpha \dth \Psi_2)  \,  \, \dR^{k-i} \dth^{N-k-m} (R \dR \Omega_{2})  \, \, R^{2k} \dR^{k} \dth^{N-k} \Omega_{r}
 \end{split}
\end{equation*}

From the definition of $\Wn^{N+1,\infty}$ norm, we have for $i+m \leq N$
$$
|R^i\dR^i \dth^{m+1}    \Psi_2 |_{L^{\infty}} \leq  | \Psi_2|_{\Wn^{N+1,\infty}}  
$$

Again, applying   Lemma \ref{LPsi2Est} and Lemma \ref{LOmega2Est},  we obtain 

\begin{equation*}
\begin{split}
\sum_{i+m=0}^{N}& \int   R^i\dR^i \dth^{m} ( \, \alpha \dth \Psi_2)  \, R^{k-i} \, \dR^{k-i} \dth^{N-k-m} (R \dR \Omega_{2})  \, \, R^{k} \dR^{k} \dth^{N-k} \Omega_{r}\\
 &\leq  c_N  \sum_{i+m=0}^{N}  \alpha |R^i\dR^i \dth^{m+1}    \Psi_2 |_{L^{\infty}} \,       | R^{k-i} \, \dR^{k-i} \dth^{N-k-m}  ( R  \dR \Omega_2) |_{L^2} \,  |R^{k} \dR^{k} \dth^{N-k} \Omega_{r} |_{L^2}  \\
& \leq c_N    \alpha  | \Psi_2|_{\Wn^{N+1,\infty}}   \,      |\Omega_2|_{\Hn^{N+1}} \,       |\Omega_r|_{\Hn^N}  \leq  \alpha \,  \frac{c_{N}}{\alpha} e^{\frac{c_N}{\alpha} t}  |\Omega_r|_{\Hn^N}  \leq c_N  e^{\frac{c_N}{\alpha} t}  |\Omega_r|_{\Hn^N}\\
 \end{split}
\end{equation*}

 Thus, we have

 \begin{equation} \label{I11H}
 \Big\langle I_{1,1}, \Omega_r  \Big\rangle_{\Hn^N} \leq  c_N  e^{\frac{c_N}{\alpha}t}  |\Omega_r|_{\Hn^N}
\end{equation}

\item  $I_{1,2}= -\alpha \dth \Psi_2   \,  R  \dR \Omega_r   $ 

Here we have 
$$ \big\langle \partial^{N}(\alpha \dth \Psi_2   \,  R  \dR \Omega_r ), \partial^{N} \Omega_r \big\rangle =\sum_{i=0}^N c_{i,N}  \int \partial^{i} ( \alpha \dth \Psi_2 )    \partial^{N-i} ( R  \dR \Omega_r) \,   \partial^{N}\Omega_r    $$
To obtain this estimate, we again apply Lemma \ref{LPsi2Est}. Namely, that $ | \Psi_2|_{\Wn^{k+1,\infty}} \leq \frac{c_{k}}{\alpha}$.  When $i=0$, we integrate by parts and obtain

$$
\int  ( \alpha \dth \Psi_2 )    \partial^{N} ( R  \dR \Omega_r) \,   \partial^{N}\Omega_r \leq c  | \Psi_2|_{\Wn^{2,\infty}}  |\Omega_r|_{\Hn^N}^2 \leq   \frac{c_N}{\alpha}  |\Omega_r|_{\Hn^N}^2
$$

For $1 \leq i\leq N$ we have,

   \begin{equation*}
\begin{split}
\sum_{i=1}^N\int \alpha \partial^{i} (  \dth \Psi_2 )    \partial^{N-i} ( R  \dR \Omega_r) \,   \partial^{N}\Omega_r &\leq  c_N  \sum_{i=1}^N  \alpha |\partial^{i}   \dth \Psi_2 |_{L^{\infty}} \,       |\partial^{N-i} ( R  \dR \Omega_r) |_{L^2} \,  |\partial^{N}\Omega_r |_{L^2}  \\
& \leq c_N    \alpha    | \Psi_2|_{\Wn^{N+1,\infty}}  \,      |\Omega_r|_{\Hn^{N}} \,       |\Omega_r|_{\Hn^N}  \leq  \alpha \,  \frac{c_{N}}{\alpha}   |\Omega_r|_{\Hn^N}^2  \leq c_N  |\Omega_r|_{\Hn^N}^2\\
 \end{split}
\end{equation*}

Similarly, Now for the $R^{k} \dR^{k} \dth^{N-k}$ terms we have

\begin{equation*}
\begin{split}
\big\langle R^{k}  \dR^{k} \dth^{N-k}(\alpha \dth \Psi_2  & \,  R  \dR \Omega_r ) ,   R^{k} \dR^{k} \dth^{N-k} \Omega_r \big\rangle = \\
&  c_{i,m,N}  \int \sum_{i+m=0}^{N}  R^{k} \dR^i \dth^{m} ( \, \alpha \dth \Psi_2)  \,  \, \dR^{k-i} \dth^{N-k-m} (R \dR \Omega_{r})  \, \, R^{k} \dR^{k} \dth^{N-k} \Omega_{r} 
 \end{split}
\end{equation*}

We again use $ | \Psi_2|_{\Wn^{k+1,\infty}} \leq \frac{c_{k}}{\alpha}$. Hence, we have

   \begin{equation*}
\begin{split}
\sum_{i+m=0}^{N}& \int   R^i\dR^i \dth^{m} ( \, \alpha \dth \Psi_2)  \, R^{k-i} \, \dR^{k-i} \dth^{N-k-m} (R \dR \Omega_{r})  \, \, R^{k} \dR^{k} \dth^{N-k} \Omega_{r}\\
 &\leq  c_N  \sum_{i+m=0}^{N}  \alpha |R^i\dR^i \dth^{m+1}    \Psi_2 |_{L^{\infty}} \,       | R^{k-i} \, \dR^{k-i} \dth^{N-k-m}  ( R  \dR \Omega_r) |_{L^2} \,  |R^{k} \dR^{k} \dth^{N-k} \Omega_{r} |_{L^2}  \\
& \leq c_N    \alpha  | \Psi_2|_{\Wn^{N+1,\infty}}   \,      |\Omega_r|_{\Hn^{N}} \,       |\Omega_r|_{\Hn^N}  \leq  \alpha \,  \frac{c_{N}}{\alpha}   |\Omega_r|_{\Hn^N}^2  \leq c_N    |\Omega_r|_{\Hn^N}^2\\
 \end{split}
\end{equation*}

 Thus, we have  
   \begin{equation} \label{I12H}
 \Big\langle I_{1,2}, \Omega_r  \Big\rangle_{\Hn^N} \leq  c_N  |\Omega_r|_{\Hn^N}^2
\end{equation}

\item  $ I_{1,3}=-   \alpha  (\dth \Psi_r ) R  \dR \Omega_2   $

To obtain the estimate on $I_{1,3}$, we will use Lemma \ref{LOmega2Est} which will give us the  estimate on $\Omega_2$. In addition, to bound the $\dth \Psi_r$ term, we will use  the elliptic from Proposition \ref{EllipticEst}  and embedding estimates from Lemma \ref{PsiEmbed}. Now we have 
$$ \big\langle \partial^{N}(\alpha \dth \Psi_r   \,  R  \dR \Omega_2 ), \partial^{N} \Omega_r \big\rangle =\sum_{i=0}^N c_{i,N}  \int \partial^{i} ( \alpha \dth \Psi_r )    \partial^{N-i} ( R  \dR \Omega_2) \,   \partial^{N}\Omega_r    $$

When $  0 \leq i \leq \frac{N}{2}   $, we will use the embedding from  Lemma \ref{PsiEmbed}. Namely that  $$  |\dR^k \dth^m \Psi|_{L^{\infty}}  \leq  c_{k,m}   |\Omega|_{\Hn^{k+m+1}} $$
Thus, we have 
 
    \begin{equation*}
\begin{split}
& \sum_{i=0}^{\frac{N}{2}}  \int \partial^{i} ( \alpha \dth \Psi_r )    \partial^{N-i} ( R  \dR \Omega_2) \,   \partial^{N}\Omega_r  \leq  \sum_{i=0}^{\frac{N}{2}}   \alpha  | \partial^{i} \dth  \Psi_r |_{L^{\infty}}|\,   |\partial^{N-i} ( R  \dR \Omega_2)|_{L^2}    |  \partial^{N} \Omega_r  |_{L^2}   \\  
& \leq       \sum_{i=0}^{\frac{N}{2}} \alpha   |\Omega_r|_{\Hn^{i+2}}   |\Omega_2|_{\Hn^{N+1}}   |\Omega_r|_{\Hn^{N} }  \leq      \alpha      |\Omega_r|_{\Hn^{\frac{N}{2}+3}}   |\Omega_2|_{\Hn^{N+1}}     |\Omega_r|_{\Hn^{N} }  \leq  c_N \alpha     e^{\frac{c_N}{\alpha}}   |\Omega_r|_{\Hn^{N} }^2   
 \end{split}
\end{equation*}
Here we used  Lemma \ref{LOmega2Est} for $ |\Omega_2|_{\Hn^{N+1}}$ term.

  When $  \frac{N}{2}  \leq i \leq N $  we will use the elliptic estimate  from Proposition \ref{EllipticEst}. Namely,
  
   $$ | \partial^k \dth \Psi|_{L^2} \, \leq c_{k} |\Omega|_{\Hn^k}$$ 
   
    Thus we have

    \begin{equation*}
\begin{split}
& \sum_{i=\frac{N}{2}}^{N}  \int \partial^{i} ( \alpha \dth \Psi_r )    \partial^{N-i} ( R  \dR \Omega_2) \,   \partial^{N}\Omega_r  \leq  \sum_{i=\frac{N}{2}}^{N}     \alpha  | \partial^{i} \dth  \Psi_r |_{L^{2}}|\,   |  R  \dR \Omega_2|_{\Wn^{N-i,\infty}}   |  \partial^{N} \Omega_r  |_{L^2}   \\  
& \leq        \sum_{i=\frac{N}{2}}^{N}    \alpha   |\Omega_r|_{\Hn^{i}}   |\Omega_2|_{\Wn^{\frac{N}{2},\infty}}    |\Omega_r|_{\Hn^{N} }  \leq      \alpha      |\Omega_r|_{\Hn^{\frac{N}{2}+3}}   |\Omega_2|_{\Hn^{N}}     |\Omega_r|_{\Hn^{N} }  \leq  c_N  \alpha     e^{\frac{c_N}{\alpha}t}   |\Omega_r|_{\Hn^{N} }^2   
 \end{split}
\end{equation*}

Similarly, to estimate the following inner product

 $$ \big\langle  \dR^{k} \dth^{N-k}(\alpha  (\dth \Psi_r ) R  \dR \Omega_2    ),  R^{2k} \dR^{k} \dth^{N-k}  \Omega_r \big\rangle \leq  c_N  \alpha     e^{\frac{c_N}{\alpha}t}   |\Omega_r|_{\Hn^{N} }^2 $$

we will use \eqref{EllipticEstW} in Proposition \ref{EllipticEst}  and embedding estimates from Lemma \ref{RPsiEmbed}. Following the same steps as we did in the previous inner product, we obtain that

\begin{equation} \label{I13H}
\Big\langle I_{1,3}, \Omega_r  \Big\rangle_{\Hn^N} \leq  c_N  \alpha     e^{\frac{c_N}{\alpha}t}   |\Omega_r|_{\Hn^{N} }^2
\end{equation}

\item Estimate on $ I_{1,4}=-   \alpha  (\dth \Psi_r ) R     \dR \Omega_r  $

To obtain the estimate on $ I_{1,4}$, we will use the elliptic estimates from  Proposition \ref{EllipticEst}. Namely,  \eqref{EllipticEstMain} and \eqref{EllipticEstW}, then we will also use the embedding estimates from Lemma  \ref{PsiEmbed} and 
 Lemma \ref{RPsiEmbed}. We will only show how to obtain the estimate on the following term:  
 
 \begin{equation*}
\begin{split}
 \big\langle \dR^k \dth^{N-k} ( \, \alpha \dth \Psi\, & R \dR \Omega_{r}) R^{2k} \dR^{k} \dth^{N-k}\Omega_{r}  \big\rangle= \\
& c_{i,m,N}  \int \sum_{i+m=0}^{N} \dR^i \dth^{m} ( \, \alpha \dth \Psi)  \,  \,  \dR^{k-i} \dth^{N-k-m} (R \dR \Omega_{r})  \, \, R^{2k} \dR^{k} \dth^{N-k} \Omega_{r}
 \end{split}
\end{equation*}

For the other inner product, the idea is the same. To start the estimate, first we consider the case when $i=m=0$, we integrate by parts and use the embedding estimates in Lemma  \ref{PsiEmbed} and 
 Lemma \ref{RPsiEmbed}, we have

 \begin{equation*}
\begin{split}
& \int \alpha \dth \Psi  \,  \Big( R^{k+1} \dR^{k+1} \dth^{N-k}  \Omega_{r}  + R^{k}  \dR^{k} \dth^{N-k}  \Omega_{r}  \Big) \, \, R^{k} \dR^{k} \dth^{N-k} \Omega_{r}   \\& \leq\alpha |R \dRth \Psi|_{L^{\infty}} \,       |R^{k} \dR^{k} \dth^{N-k} \Omega_{r}|_{L^2}^2 + \alpha | \dth \Psi|_{L^{\infty}} \,       |R^{k} \dR^{k} \dth^{N-k} \Omega_{r}|_{L^2}^2       \\ 
 & \leq c_N (    |\Omega_r|_{\Hn^3}  \,        |\Omega_r|_{\Hn^N}^2 +  |\Omega_r|_{\Hn^2}  \,        |\Omega_r|_{\Hn^N}^2)        \leq     c_N  |\Omega_r|^3_{\Hn^N}   \\
 \end{split}
\end{equation*}

 Now when $ 1 \leq i+m  \leq \frac{N}{2}$, we use Lemma \ref{RPsiEmbed} and the definition of the $\Hn^k$ norm to obtain: 
\begin{equation*}
\begin{split}
&  \sum_{i+m \geq 1}^{\frac{N}{2}}R^i \dR^i \dth^{m} ( \, \alpha \dth \Psi)  \,  \Big( R^{k+1-i} \dR^{k+1-i} \dth^{N-k-m}  \Omega_{r}  + R^{k-i}  \dR^{k-i} \dth^{N-k-m}  \Omega_{r}  \Big) \, \, R^{k} \dR^{k} \dth^{N-k} \Omega_{r}
 \\  
&  \leq \sum_{i+m \geq 1}^{\frac{N}{2}}   \alpha |R^i \, \dR^{i} \dth ^{m+1}   \Psi|_{L^{\infty}} \,   |R^{k+1-i} \dR^{k+1-i} \dth^{N-k-m}  \Omega_{r}|_{L^2}    \, \, |R^{k} \dR^{k} \dth^{N-k} \Omega_{r}|_{L^2} \,  +\\
& \quad   \, \sum_{i+m \geq 1}^{\frac{N}{2}}   \alpha |R^i \, \dR^{i} \dth ^{m+1}   \Psi|_{L^{\infty}} \,   |R^{k-i}  \dR^{k-i} \dth^{N-k-m}  \Omega_{r}|_{L^2}   \, \, |R^{k} \dR^{k} \dth^{N-k} \Omega_{r}|_{L^2} \\
& \leq   c_N    \sum_{i+m \geq 1}^{\frac{N}{2}}     |\Omega|_{\Hn^{i+m+2}} \big(  |\Omega|_{\Hn^N} + |\Omega|_{\Hn^{N-1}}\big)   |\Omega_r|_{\Hn^{N} }  \leq  c_N  |\Omega_{r}|_{\Hn^{\frac{N}{2}+2}} \big(  |\Omega|_{\Hn^N} + |\Omega|_{\Hn^{N-1}}\big)  | \Omega_{r}|_{\Hn^N }   \\ 
& \leq  c_N | \Omega_{r}|_{\Hn^N }^3
 \end{split}
\end{equation*}

Now for the case when $ \frac{N}{2} \leq i+m  \leq N$, we will use Lemma \ref{OmegaEmbed} and the elliptic estimates from Proposition \ref{EllipticEst} to obtain

\begin{equation*}
\begin{split}
&  \sum_{i+m \geq \frac{N}{2}}^{N}R^i \dR^i \dth^{m} ( \, \alpha \dth \Psi)  \,  \Big( R^{k+1-i} \dR^{k+1-i} \dth^{N-k-m}  \Omega_{r}  + R^{k-i}  \dR^{k-i} \dth^{N-k-m}  \Omega_{r}  \Big) \, \, R^{k} \dR^{k} \dth^{N-k} \Omega_{r}
 \\  
&  \leq \sum_{i+m \geq \frac{N}{2}}^{N}   \alpha |R^i \, \dR^{i} \dth ^{m+1}   \Psi|_{L^2} \,  \Big(  |R^{k+1-i} \dR^{k+1-i} \dth^{N-k-m}  \Omega_{r}|_{L^{\infty}}  \Big) \, \, |R^{k} \dR^{k} \dth^{N-k} \Omega_{r}|_{L^2} \, + \\
&  \quad  \sum_{i+m \geq \frac{N}{2}}^{N}   \alpha |R^i \, \dR^{i} \dth ^{m+1}   \Psi|_{L^2} \,  \Big( |R^{k-i}  \dR^{k-i} \dth^{N-k-m}  \Omega_{r}|_{L^{\infty}}  \Big) \, \, |R^{k} \dR^{k} \dth^{N-k} \Omega_{r}|_{L^2} \\
& \leq \sum_{i+m \geq \frac{N}{2}}^{N}  |\Omega_{r}|_{\Hn^{i+m-1}} \,  \big(  |\Omega|_{\Hn^{N-(i+m)+3}} +  |\Omega|_{\Hn^{N-(i+m)+2}}  \big) \,  | \Omega_{r}|_{\Hn^N } \leq  c_N   |\Omega_{r}|_{\Hn^{N-1}} |\Omega|_{\Hn^{\frac{N}{2}+3}}       | \Omega_{r}|_{\Hn^N } \\ 
& \leq c_N | \Omega_{r}|_{\Hn^N }^3
 \end{split}
\end{equation*}

and thus, we have the following: 
\begin{equation} \label{I14H}
 \Big\langle I_{1,4}, \Omega_r  \Big\rangle_{\Hn^N} \leq  c_N  |\Omega_r|_{\Hn^N}^3
\end{equation}

\end{itemize}

Thus, we have the following estimate on $I_1$ term 

\begin{equation} \label{I1H}
 \Big\langle I_{1}, \Omega_r  \Big\rangle_{\Hn^N} \leq c_N  e^{\frac{c_N}{\alpha}t}  |\Omega_r|_{\Hn^N}+ c_N       e^{\frac{c_N}{\alpha}t}   |\Omega_r|_{\Hn^{N} }^2
+c_N  |\Omega_r|_{\Hn^N}^3
\end{equation}

\textbf{Estimate on $I_3$}

The estimate on $I_3$ follows similarly to $I_1$, so we skip the details for this case.  One can obtain the following: 
\begin{equation} \label{I3H}
 \Big\langle I_{3}, \Omega_r  \Big\rangle_{\Hn^N} \leq c_N  e^{\frac{c_N}{\alpha}t}  |\Omega_r|_{\Hn^N}+ c_N       e^{\frac{c_N}{\alpha}t}   |\Omega_r|_{\Hn^{N} }^2
+c_N  |\Omega_r|_{\Hn^N}^3
\end{equation}

\textbf{Estimate on $I_2$}

Here we have 
$$ I_2=   ( 2  \Psi_2   \dth \Omega_r + 2  \Psi_r   \dth \Omega_2 + 2   \Psi_r   \dth \Omega_r  ) \  =  I_{2,1}+ I_{2,2}+I_{2,3} $$

\begin{itemize}

\item $I_{2,1}= 2 \Psi_2   \dth \Omega_r$

To estimate $I_{2,1}$, we follow the same steps as in the $I_1$ term. Using Lemma \ref{LPsi2Est}. Namely, that $ | \Psi_2|_{\Wn^{N,\infty}} \leq \frac{c_{N}}{\alpha}$, we have:

\begin{equation} \label{I21H}
\Big\langle I_{2,1}, \Omega_r  \Big\rangle_{\Hn^N} \leq   \frac{c_N}{\alpha}  |\Omega_r|_{\Hn^N}^2
 \end{equation}

\item  $I_{2,2}= 2  \Psi_r   \dth \Omega_2 $

Similarly, To estimate $I_{2,2}$, we also follow the same steps as we did in $I_1$. Using Lemma \ref{LOmega2Est}, that $  |\Omega_2|_{\Hn^{k}}  \leq   | \Omega_2(0)|_{\Hn^k} e^{ \frac{c_k}{ \alpha}t}$, we obtain:      
\begin{equation} \label{I22H}
\Big\langle I_{2,2}, \Omega_r  \Big\rangle_{\Hn^N} \leq   c_N      e^{\frac{c_N}{\alpha}t}   |\Omega_r|_{\Hn^{N} }^2  
\end{equation}

\item$I_{2,3}= 2   \Psi_r   \dth \Omega_r$

This terms $I_{2,3}$ can be estimated in a similar way as in the $I_{1,4}$ term, by using embedding and elliptic estimate, we have

\begin{equation} \label{I23H}
\Big\langle I_{2,3}, \Omega_r  \Big\rangle_{\Hn^N} \leq    c_N  |\Omega_r|_{\Hn^N}^3  
\end{equation}

\end{itemize}

Hence, we obtain 

\begin{equation} \label{I2H}
\Big\langle I_{2}, \Omega_r  \Big\rangle_{\Hn^N} \leq  \frac{c_N}{\alpha}  |\Omega_r|_{\Hn^N}^2 + c_N      e^{\frac{c_N}{\alpha}t}   |\Omega_r|_{\Hn^{N} }^2  +c_N  |\Omega_r|_{\Hn^N}^3 \leq  c_N      e^{\frac{c_N}{\alpha}t}   |\Omega_r|_{\Hn^{N} }^2  +c_N  |\Omega_r|_{\Hn^N}^3
\end{equation}

\textbf{Estimate on $I_4$, $I_5$, $I_6$, $I_7$, and $I_8$}

We can write $I_4$ as: 
\begin{equation*}
\begin{split}
 I_4&=  2 \alpha R \sin(\theta) \cos(\theta) +\alpha^2 R \sin(\theta) \cos(\theta) )  (\dR \Psi_2 + \dR \Psi_r  )\\
    & =  \alpha(2+\alpha) \,\sin(\theta) \cos(\theta) \,  R \dR \Psi_2 +  \alpha(2+\alpha) \,\sin(\theta) \cos(\theta) \,  R \dR \Psi_r\\
    &=I_{4,1}+ I_{4,2}
    \end{split}
\end{equation*}

Recall that $$I_{5}=(1-2\sin^2(\theta)) \dth \Psi_r$$. 

We can also rewrite and $I_{6}$ and $I_{7}$ as follows:

\begin{equation*}
\begin{split}
 I_{6}&= \alpha(  \cos^2(\theta) - \sin^2(\theta)) R( \dRth\Psi_2+\dRth\Psi_r)    \\
    & = \alpha(  \cos^2(\theta) -  \sin^2(\theta)) R \dRth\Psi_2 + \alpha(  \cos^2(\theta) -  \sin^2(\theta)) R \dRth\Psi_r\\
    &=I_{6,1}+I_{6,2} 
    \end{split}
\end{equation*}
and 
\begin{equation*}
\begin{split}
 I_7&= \alpha^2 (\sin(\theta) \cos(\theta))R^2 (\dRR \Psi_2+ \dRR \Psi_r)   \\
    & = \alpha^2 (\sin(\theta) \cos(\theta))R^2 \dRR \Psi_2 +\alpha^2 (\sin(\theta) \cos(\theta))R^2 \dRR \Psi_r \\
    &=I_{7,1}+I_{7,2} 
    \end{split}
\end{equation*}
Recall that $$I_{8}=-\sin(\theta) \cos(\theta) \dthth \Psi_r$$.

Now for $i=4, 6, \text{and} \, 7$,  using Lemma \ref{LPsi2Est}. Namely, that $|\Psi|_{\Hn^{k+1}} \leq  \frac{c_{k}}{ \alpha}$, we have the following estimate: 
\begin{equation} \label{Ii1H}
\big\langle I_{i,1} \,  ,  \, \Omega_r \big\rangle_{\Hn^{N}} \leq   c_{N} |\Omega_r|_{\Hn^{N}}  \quad \text{for}\quad i=4,6,7
\end{equation}
Using  the elliptic estimates in Proposition  \ref{EllipticEst}, we   obtain that: 

\begin{equation} \label{Ii2H}
\big\langle I_{i,2} \,  ,  \, \Omega_r \big\rangle_{\Hn^{N}} \leq c_N |\Omega_r|_{\Hn^{N}}^2  \quad \text{for}\quad i=4,6,7
\end{equation}
and 
\begin{equation} \label{I58H}
\big\langle I_{i} \,  ,  \, \Omega_r \big\rangle_{\Hn^{N}} \leq c_N |\Omega_r|_{\Hn^{N}}^2  \quad \text{for}\quad i=5,8
\end{equation}
Hence, from \eqref{Ii1H}, \eqref{Ii2H}, \eqref{I58H}, we have  that 

\begin{equation} \label{I45678H}
\big\langle I_{i} \,  ,  \, \Omega_r \big\rangle_{\Hn^{N}} \leq c_N |\Omega_r|_{\Hn^{N}} +c_N |\Omega_r|_{\Hn^{N}}^2  \quad \text{for}\quad i=4,5, \dots 8
\end{equation}

\textbf{Totally reminder  estimate:}

Here we obtain the totally error estimate. From our previous work we have,   
\begin{equation*}
\begin{split}
\frac{d}{dt} |\Omega_r|^2_{\Hn^{N}} =  \big\langle \partial_t  \Omega_r  ,  \, \Omega_r \big\rangle_{\Hn^{N}}  &\leq   \sum_{i=1}^8 |   \, \big\langle  I_{i}  ,  \, \Omega_r \big\rangle_{\Hn^{N}}  | 
 \\
\end{split}
\end{equation*}

and thus from \eqref{I1H}, \eqref{I3H}, \eqref{I2H}, and   \eqref{I45678H}  , we have 
\begin{equation*}
\begin{split}
\frac{d}{dt}  |\Omega_r|^2_{\Hn^{N}} &\leq    
      c_N  e^{\frac{c_N}{\alpha}t}  |\Omega_r|_{\Hn^N}+\Big( \frac{c_N}{\alpha}  +c_N      e^{\frac{c_N}{\alpha}t} \Big) \,  |\Omega_r|_{\Hn^{N} }^2   
+c_N  |\Omega_r|_{\Hn^N}^3    \\    
\end{split}
\end{equation*}
and hence 

\begin{equation*}
\begin{split}
\frac{d}{dt}  |\Omega_r|_{\Hn^{N}} &\leq    
      c_N  e^{\frac{c_N}{\alpha}t}+ \Big( \frac{c_N}{\alpha}  +c_N      e^{\frac{c_N}{\alpha}t} \Big) \,  |\Omega_r|_{\Hn^{N} }  
+c_N  |\Omega_r|_{\Hn^N}^2    \\    
\end{split}
\end{equation*}
Now since we have $\Omega_r|_{t=0}=0$, then by bootstrap,  it follows that

\begin{equation*}
\begin{split}
  |\Omega_r|_{\Hn^{N}} &\leq    
      \Big( \int_0^t c_N  e^{\frac{c_N}{\alpha}\tau} \, d\tau \Big) \exp(\int_0^t  \frac{c_N}{\alpha}  +c_N      e^{\frac{c_N}{\alpha}\tau} \, d\tau) \leq  \alpha c_N (e^{\frac{c_N}{\alpha} t} -1) \exp(  \frac{c_N}{\alpha} t  + \alpha c_N      e^{\frac{c_N}{\alpha}t} \, ) \\    
\end{split}
\end{equation*}

Thus, if we choose $t <  T= c \, \alpha |\log \alpha|$ for $c$ small, say $c=\frac{1}{4 \, c_N}$,  then we have 
\begin{equation*}
\begin{split}
  |\Omega_r|_{\Hn^{N}} &\leq    
     c_N  \alpha^{\frac{1}{2}} \, \\    
\end{split}
\end{equation*}

and this completes the proof of  Proposition \ref{ReminderEst}

 \end{proof} 
 
 \section{ Main result  }\label{Main}
 
We now recall and prove the main theorem of this work. 

\begin{theorem}
For any $\alpha,\delta>0$, there exists an initial data $\omega_0^{\alpha,\delta} \in C_c^{\infty}(\mathbb{R}^2)$  and   $T(\alpha)$ such that the corresponding unique global solution, $\omega^{\alpha,\delta}$, to \eqref{EulerR} is such that 
  at $t=0$ we have $$|\omega_0^{\alpha,\delta}|_{L^{\infty}}=\delta,$$ but for any  $0<t\leq T(\alpha)$ we have $$    \quad|\omega^{\alpha,\delta}(t)|_{L^{\infty}} \geq |\omega_0|_{L^\infty}+c \log (1+\frac{c}{\alpha}t),
  $$
where $T(\alpha)= c \alpha |\log(\alpha)|$ and $c>0$ is a constant independent of $\alpha.$
\end{theorem}
\begin{proof}

Consider the initial data of the form 

$$
\omega_0=\Omega|_{t=0}=f_{0}(R) \sin(2\theta)
$$
where $f(R)$ is non-negative compactly support  smooth function which is zero on $[0,1)$ and positive outside. We know that we can write  $\Omega=\Omega_2 +\Omega_r$, and from the form of initial data, we have $\Omega_r|_{t=0}=0$ and thus from Proposition \ref{ReminderEst}, we have 

$$
  |\Omega_r (t)|_{L^{\infty}} \leq    
    c_N   \alpha^{\frac{1}{2}} \, 
$$  

for $0\leq t \leq c \,  \alpha |\log \alpha|$, where recall that $c$ is a small constant independent of $\alpha$. 
Recall also that we can write $\Omega_2$ as:
$$
 \Omega_2= f + \frac{1}{2 \alpha} \int_0^t  L_s(f_\tau) d \tau
$$
and thus from Proposition \ref{LeadingEst}, we obtain that 

$$
 \Omega_2= f + \frac{1}{2 \alpha} \int_0^t  L_s(f_\tau) d \tau \geq f+ c_0  \log(1+ \frac{c_0}{\alpha}  \, t ))
$$
for some $c_0$ independent of  of $ \alpha$ and thus we have our desired result.

\end{proof}

\section*{Acknowledgements}
Both authors were partially supported by the NSF grants DMS 2043024 and DMS 2124748. T.M.E. was partially supported by an Alfred P. Sloan Fellowship.

\vskip 0.3 cm
Department of Mathematics, Duke University, Durham, NC 27708, USA

\textit{Email address:} tarek.elgindi@duke.edu

Department of Mathematics, Duke University, Durham, NC 27708, USA

\textit{Email address:} karimridamoh.shikh.khalil@duke.edu


\begin{thebibliography}{99}

 \bibitem{BKM} J. T. Beale, T. Kato,  A. Majda. \textit{Remarks on the breakdown of smooth solutions for the 3-D
Euler equations}, Comm. Math. Phys, 94(1):61-66, 1984.



 \bibitem{BLW} N. Boardman, H. Lin,  J. Wu. \textit{Stabilization of a background magnetic field on a 2
dimensional magnetohydrodynamic flow.} SIAM J. Math. Anal. 52 (2020): 5001-35.




\bibitem{BL} J. Bourgain, D. Li.  \textit{Strong illposedness of the incompressible Euler equation in integer $C^m$ spaces.} Geom. Funct. Anal. 25 (2015), no. 1, 1?86

 \bibitem{CW}  C. Cao, J. Wu. \textit{Global regularity for the 2D MHD equations with mixed partial
dissipation and magnetic diffusion.} Adv. Math. 226 (2011): 1803-22.

 \bibitem{CM} J. Y. Chemin and N. Masmoudi.  \textit{About lifespan of regular solutions of equations related to
viscoelastic fluids.} SIAM J. Math. Anal. 33(1), 84?112, 2001


 \bibitem{CV}  P. Constantin, V. Vicol, \textit{Nonlinear maximum principles for dissipative linear nonlocal operators and applications}, Geom. Funct. Anal. 22(5), 1289-1321 (2012)

 

 \bibitem{CK} P. Constantin, M. Kliegl, \textit{on Global Regularity for Two-Dimensinoal Oldroyd-B Fluids with Diffusive Stress},
Arch. Ration. Mech. Anal, 206, no. 3, 725-74


 \bibitem{DE} T. D. Drivas, T. M. Elgindi, \textit{Singularity formation in the incompressible Euler equation in finite and infinite time},
 arXiv:2203.17221, (2022).



 \bibitem{E} T. M. Elgindi . \textit{Finite-Time Singularity Formation for $C^{1,\alpha}$ Solutions to the Incompressible Euler Equations
on $\mathbb{R}^3$,
} Annals of Mathematics 194.3 (2021): 647-727.

 \bibitem{EsharpLp} T. M. Elgindi, \textit{Sharp $L^p$ estimates for singular transport equations}, Adv. Math. 329 (2018), 1285-1306

 \bibitem{Eremarks} T. M. Elgindi, \textit{Remarks on functions with bounded Laplacian.} arXiv:1605.05266, 2016

\bibitem{EJ} T. M. Elgindi, and I-J. Jeong, \textit{On singular vortex patches, I: Well-posedness issues.} to appear in
Memoirs of the American Mathematical Society



 \bibitem{EM} T. M. Elgindi and N. Masmoudi. \textit{$L^{\infty}$ ill-posedness for a class of equations arising in
hydrodynamics} Arch. Ration. Mech. Anal. 235 (2020), no. 3, 1979-2025, arXiv:1405.2478


 
 \bibitem{EF}  T. M. Elgindi  and F. Rousset  \textit{ Global regularity for some Oldroyd type models} Commun. Pure.Appl. Math. 68 2005-2021, 2015

\bibitem{GV} N. Glatt-Holtz, and V. Vicol, \textit{Local and global existence of smooth solutions for the stochastic
Euler equations on a bounded domain.} Ann. Probab. 42(1), 80?145, (2014)

 \bibitem{H}  T. Hmidi, \textit{On the Yudovich solutions for the ideal MHD equations}, Nonlinearity.  27 (2014), 3117-3158. 
 
 \bibitem{HO}  E. \"{H}older, \textit{\"{U}ber die unbeschr\"{a}nkte Fortsetzbarkeit einer stetigen ebenen Bewegung in einer unbegrenzten
inkompressiblen Fl\"{u}ssigkeit.} Math. Z. 37 (1933), 727-738
 

\bibitem{Ka} T. Kato, \textit{On classical solutions of the two-dimensional non-stationary Euler equation.} Arch. Rat. Mech.
27 (1968), 188?200


\bibitem{KS} A. Kiselev and V. \v{S}ver\'{a}k. \textit{Small scale creation for solutions of the incompressible
two-dimensional Euler equation.} Ann. of Math. (2), 180(3):1205?1220, 2014.

 \bibitem{LM} P.-L. Lions, N. Masmoudi, \textit{Global solutions for some Oldroyd models of non-Newtonian flows}, Chinese Ann. Math. Ser. B 21 (2000) 131146.
 
 
  \bibitem{MB} A. Majda, A. Bertozzi. \textit{Vorticity and incompressible flow.} Cambridge Texts in Applied
Mathematics, 27, 2002

  \bibitem{MP} C. Marchioro and M. Pulvirenti, \textit{Mathematical Theory of Incompressible Nonviscous Fluids.} Applied Mathematical Sciences, 96, Springer-Verlag, New York, 1994

 \bibitem{Wo} W. Wolibner, \textit{Un theor\`{e}me sur l?existence du mouvement plan d?un uide parfait, homog\`{e}ne, incompressible,
pendant un temps infiniment long} (French), Mat. Z., 37 (1933), 698?726


 \bibitem{WZ} J. Wu, J. Zhao, \textit{Mild ill-posedness in $L^\infty$ for 2D resistive MHD equations near a background magnetic field.} International Mathematics Research Notices, 2022, rnac007, https://doi.org/10.1093/imrn/rnac007


 \bibitem{Y} Y. Yudovich.\textit{ Nonstationary flow of an ideal incompressible liquid.} Zh. Vych. Mat., 3 (1963), 1032?1066.






\end{thebibliography}
\end{document}